\documentclass[11pt]{article}
\usepackage[a4paper,margin=26mm,top=28mm,bottom=28mm,
  headheight=14pt,headsep=8mm]{geometry}
\usepackage[T1]{fontenc}
\usepackage{lmodern}
\usepackage{mathtools,amssymb,amsthm,microtype}
\usepackage{titlesec,titling,fancyhdr}
\usepackage[hidelinks]{hyperref}
\hypersetup{
  pdftitle={The Generalized Lax Conjecture for Strictly Hyperbolic Polynomials},
  pdfauthor={Mario Kummer and Tim Netzer}}

\pretitle{\begin{center}\LARGE\bfseries}
\posttitle{\par\end{center}\vskip 0.7em}
\predate{\begin{center}\small}
\postdate{\par\end{center}}
\titleformat{\section}{\large\bfseries}{\thesection}{0.7em}{}
\titleformat{\subsection}{\normalsize\bfseries}{\thesubsection}{0.7em}{}
\titlespacing*{\section}{0pt}{2.2ex plus 1ex minus .2ex}{1.2ex plus .2ex}
\titlespacing*{\subsection}{0pt}{1.8ex plus .8ex minus .2ex}{.8ex plus .2ex}
\newcommand{\R}{\mathbb R}
\newcommand{\C}{\mathbb C}
\newcommand{\PP}{\mathbb P}
\newcommand{\inter}{\operatorname{int}}
\newcommand{\dist}{\operatorname{dist}}
\newcommand{\pr}{\operatorname{pr}}
\newcommand{\mm}{\mathbf m}

\newcommand{\conv}{\operatorname{conv}}
\newtheorem{theorem}{Theorem}
\newtheorem{lemma}{Lemma}[section]

\title{The Generalized Lax Conjecture for Strictly Hyperbolic Polynomials}
\author{%
Mario Kummer\\
\small TU Dresden
\and
Tim Netzer\\
\small University of Innsbruck}
\date{\today}
\begin{document}
\maketitle

\begin{abstract}
We report a proof found by an artificial
intelligence system for the following statement: The hyperbolicity cone of every strictly hyperbolic
polynomial is spectrahedral. This establishes the generalized
Lax conjecture for the generic case, while the full conjecture remains open.
\end{abstract}

\begin{quote}
\small
\noindent\textbf{AI Declaration.}
The results in this paper were produced almost entirely by an artificial
intelligence system.  We posed the question and suggested
possible approaches.  We evaluated the output, clarified and simplified the
arguments, checked them for correctness, and edited the final text.  There is
no settled convention for attributing such work.  We therefore do not claim
ownership and will not submit the paper to a mathematical journal.  Comments
and corrections are welcome at \href{mailto:mario.kummer@tu-dresden.de}{mario.kummer@tu-dresden.de} and
\href{mailto:tim.netzer@uibk.ac.at}{tim.netzer@uibk.ac.at}.
\end{quote}

\section{Introduction}

\subsection{Hyperbolic polynomials and the Lax conjectures}
A real homogeneous polynomial $h$ is called
\emph{hyperbolic with respect to} a vector $e$ if $h(e)>0$ and, for
every real vector $x$, the univariate polynomial $t\mapsto h(x-te)$
has only real roots.
The polynomial is \emph{strictly hyperbolic} if these roots are distinct
whenever $x$ is not a multiple of $e$. 
The associated closed \emph{hyperbolicity cone} is
\[
 C_e(h)=\{x\in\R^{n+1}\mid
       \text{all roots of }t\mapsto h(x-te)\text{ are nonnegative}\}.
\]
Equivalently, it is the closure of the connected component of
$\{h>0\}$ containing $e$. This cone is convex,
and every interior point can serve as a hyperbolicity direction
with the same cone \cite{Garding}.

For example, $h(x)=x_0^2-x_1^2-\cdots-x_n^2$ is hyperbolic with
respect to $e=(1,0,\ldots,0)$: the roots are
$x_0\pm\sqrt{x_1^2+\cdots+x_n^2}$, and its cone is the Lorentz cone 
$\{x\mid x_0\geqslant\sqrt{x_1^2+\cdots+x_n^2}\}$.
Another fundamental example is the determinant on the space of real
symmetric or complex Hermitian matrices. The roots of $\det(Z-tI)$ are the eigenvalues of
$Z$, so its hyperbolicity cone is exactly the positive semidefinite cone.

A cone is \emph{spectrahedral} if it can be written as
\[
 C=\{x\in\R^{n+1}\mid M(x)=x_0M_0+\cdots+x_nM_n\geqslant0\},
\]
where the $M_i$ are fixed complex Hermitian  matrices and $\geqslant0$
means positive semidefinite. Such a condition is a \emph{linear matrix
inequality}, and $M$ is called a linear matrix pencil.
When $M(e)>0$, the polynomial $\det M(x)$ is hyperbolic with respect
to $e$, and its hyperbolicity cone is precisely the set defined by this inequality.
This connection matters in optimization: minimizing a linear function
over an affine slice of a hyperbolicity cone is a hyperbolic program.
A spectrahedral representation makes it a semidefinite program,
for which matrix-based methods are available.

Lax's original conjecture, posed in 1958 \cite{Lax58}, concerns polynomials in
\emph{three variables}. In coordinates with $e=(1,0,0)$ and $h(e)=1$,
it asserts that every degree-$d$ hyperbolic form has a representation
$h(x)=\det(x_0I_d+x_1A+x_2B)$ with real symmetric matrices $A,B$.
This assertion follows from \cite{HV}; the connection with the original
formulation is explained in \cite{LPR}. Alternative proofs of Lax's original conjecture were given in \cite{hanselka1,hanselka2}.
An elementary approach to Hermitian determinantal representations of
hyperbolic polynomials in three variables is given in \cite{PV}.

For hyperbolic cubics in four variables, the existence of a definite
Hermitian determinantal representation in the smooth case is proved in
\cite{BuckleyKosir}. Approximation by smooth hyperbolic
cubics extends this to all hyperbolic cubics in four variables.

In more than three variables, a parameter count shows that general
hyperbolic forms of sufficiently large degree do not admit such 
determinantal representations. A natural weaker question is whether
some power of the polynomial admits such a representation:
\[
 h(x)^r=\det M(x),\qquad M(e)>0,\qquad r\geqslant1.
\]
Here $M$ is a linear matrix pencil of size $rd$ if $h$ has degree $d$.
Taking a power just repeats the roots on each line, so $C_e(h^r)=C_e(h)$. Thus a determinantal
representation of any positive power already gives a spectrahedral
representation of the original cone. It is nevertheless a stronger
requirement than asking only for a matrix inequality defining the cone.

There are several results concerning powers. In any number of
variables, every hyperbolic quadratic form has a positive power with
a determinantal representation
\cite{NTPowers}. However, taking powers does not work in
general: hyperbolic polynomials for which no positive power admits a
definite determinantal representation were constructed in
\cite{BrandenObstructions}. The underlying matroid constructions were
extended in \cite{BVY,AminiBranden}, producing further examples with
this obstruction.

A complementary approach uses necessary sums-of-squares conditions.
Such conditions are derived from the parametrized Hermite matrix in
\cite{NPTHermite} and from products of directional
derivatives in \cite{KPV}.
These conditions are unified by the notion of \emph{sos-hyperbolicity},
which yields counterexamples in every degree
at least four and every number of variables at least four, as well as
the first cubic example, in 43 variables
\cite{SaundersonNonnegativity}.
A cubic example in six variables was subsequently constructed in
\cite{BKKSS}.
For these examples, failure of the necessary sums-of-squares condition
rules out definite determinantal representations of every power.

The \emph{generalized Lax conjecture} now asks whether every hyperbolicity
cone is spectrahedral. It asks for a
representation of the \emph{cone}, not necessarily of the given
polynomial or a power of it. The preceding counterexamples therefore
do not disprove this conjecture. Indeed, the
specialized V\'amos quartic in four variables has a spectrahedral cone,
although none of its positive powers has a definite determinantal
representation \cite{KummerVamos}.
One possible route is to find an additional hyperbolic
factor $g$ such that $gh=\det M$, $M(e)>0$, and
$C_e(h)\subseteq C_e(g)$. The last condition is needed because
$C_e(gh)=C_e(h)\cap C_e(g)$: the extra factor must not cut away any
part of the desired cone.
Without this last containment condition, the existence of such a
factor $g$ and a definite determinantal representation of $gh$ is
proved in arbitrary dimension in \cite{KummerBezout},
provided that $h$ is strictly hyperbolic.

The generalized Lax conjecture is known to be true for
certain families beyond the already mentioned cases. Spectrahedrality
was proved for all
elementary symmetric polynomials, using the matrix--tree theorem
\cite{BrandenElementary}.
These results are closely related to \emph{derivative cones} \cite{renegarderi}:
if $h$ is hyperbolic with respect to $e$, then its directional
derivatives $\partial_e^k h$ are also hyperbolic for
$0\leqslant k<\deg h$, and their cones form nested relaxations
\[
 C_e(h)\subseteq C_e(\partial_e h)\subseteq\cdots
 \subseteq C_e(\partial_e^{d-1}h),\qquad d=\deg h.
\]
Spectrahedrality of the first derivative cone of a product of linear
forms defining a polyhedral cone was proved in
\cite{SanyalDerivatives}; the result extends to all higher
derivative cones of such products as well \cite{BrandenElementary}.
For the determinant on real symmetric matrices, an explicit
spectrahedral representation of the first derivative cone is given in
\cite{SaundersonDerivative}. This was extended to every derivative
cone and, by restriction, to $C_e(\partial_e^k h)$ whenever
$h=\det M$ for a real symmetric linear pencil with $M(e)>0$
\cite{KummerSpectral}.
The generalized Lax conjecture was also proved for all hyperbolic cubics
in five variables in
\cite{NetzerCubics}. On the other hand, even if the generalized Lax conjecture was true, we could not hope for small spectrahedral representations in general \cite{expobounds}.

Let us also mention a weaker form of representing convex sets.
A \emph{spectrahedral shadow} is a projection of a spectrahedron, so it allows for auxiliary variables.
If every nonzero boundary point of a hyperbolicity cone is a smooth point of the defining polynomial, then it is a spectrahedral shadow \cite{NS}. This proof uses results on lifted representations
of convex sets and convex hulls from
\cite{HeltonNieSets,HeltonNieHulls}.
This lifted conclusion was strengthened, even under the weaker
hypothesis of Nash-smooth boundary,
to a representation by second-order cones, equivalently a lifted
linear matrix inequality with blocks of size at most two
\cite{ScheidererSOCP}. 

For broader introductions to convex algebraic geometry, spectrahedra,
and semidefinite representations, we refer to
\cite{BPTBook,NetzerPlaumannBook}.

This paper presents a proof of the generalized Lax conjecture for all strictly hyperbolic polynomials:

\begin{theorem}\label{thm:main}
The hyperbolicity cone of every strictly hyperbolic polynomial is spectrahedral.
\end{theorem}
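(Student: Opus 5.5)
The plan is to realize $C_e(h)$ as the intersection of the cone of a determinantal polynomial with nothing extra cut away. Concretely, we look for a hyperbolic $g$ with
\[
 gh=\det M,\qquad M(e)>0,\qquad C_e(h)\subseteq C_e(g);
\]
then $C_e(gh)=C_e(h)\cap C_e(g)=C_e(h)$, and this is a spectrahedron. The result of \cite{KummerBezout} already gives $g$ and $M$ for strictly hyperbolic $h$, but without the containment. The strategy is therefore to revisit that construction and control where the zeros of $g$ lie.

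Recall the structure of the construction. By strict hyperbolicity, the hypersurface $X=\{h=0\}\subseteq\PP^n$ is smooth. This holds because a singular point would produce a double root on some line through $e$, and that can only fail on the line through $e$ itself, where $h(e)\neq0$. The real points of $X$ form nested ovaloids. One chooses a suitable line bundle or sheaf $\mathcal F$ on $X$ with a definite symmetric pairing (an Ulrich-type or Hermitian kernel construction). Then $\det M=gh$, where $g$ comes from an auxiliary interlacing hypersurface. In that approach $g$ is built from a polynomial $q$ interlacing $h$, for instance $q=\partial_e h$ or a generic interlacer, via a Bézout or Hermite-type matrix.

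I would aim to choose the interlacer so that the extra factor $g$ is a product of polynomials each of whose hyperbolicity cones contains $C_e(h)$. The natural candidates are derivative-type relaxations $D_a h=\sum a_i\partial_i h$ with $a\in\inter C_e(h)$, whose cones contain $C_e(h)$ by the derivative cone nesting stated in the introduction.

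The steps are as follows.
\begin{enumerate}
\item Use strict hyperbolicity to show that the cone of $D_ah$ strictly contains $C_e(h)\setminus\{0\}$ in its interior. This uses the fact that roots interlace strictly.
\item Form the Bézoutian or Hermite-type matrix of $h$ and $D_ah$ (as in \cite{NPTHermite,KummerBezout}). It is positive semidefinite on $C_e(h)$ and its determinant factors as $h^{k}$ times products of the $D_ah$-type factors.
\item Since sums of squares of polynomials appear as entries rather than linear forms, linearize. Write the Hermite matrix as $N(x)^*N(x)$-type Gram expressions in monomials and pass to a linear pencil by a Schur complement or lifting trick.
\item Remove the lifting variables using strictness. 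Show that for each boundary point the certificate is exact, and use compactness of the base $C_e(h)\cap\{e^*=1\}$ to patch finitely many local certificates together by a direct sum of pencils. Each summand is a pencil whose cone contains $C_e(h)$ and which is exact near part of the boundary.
\end{enumerate}
The intersection of these finitely many spectrahedral cones is then $C_e(h)$. Each contains $C_e(h)$, and each boundary direction of $C_e(h)$ is cut out by one of them. Exactness is then checked by convexity.

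The main obstacle is step 4: producing, for every boundary ray, a genuine linear matrix inequality (no lift) that contains $C_e(h)$ and is exact at that ray. I would try the determinantal representation from \cite{KummerBezout}, twisted by a line bundle chosen using a point of the real part of $X$ near the given boundary point. Strict hyperbolicity, which gives smoothness and transversal real topology, should make the kernel of $M$ along the nearby boundary of $C_e(h)$ one-dimensional and positive. A perturbation argument then yields the local containment. Turning local containment into global containment $C_e(h)\subseteq C_e(g)$ for each summand is the delicate point, and it is where I expect the real work to lie.
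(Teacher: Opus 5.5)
There is a genuine gap, and it sits where you suspect: step 4 is the entire theorem, and steps 1--3 do not lead toward it.

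Step 2 does not give a matrix that sees $C$. The B\'ezoutian of $h$ and $D_ah$ along $e$ is positive semidefinite at \emph{every} $x\in\R^{n+1}$, since this is equivalent to interlacing. For $a\in\inter C$, strict hyperbolicity makes every nonzero real zero of $h$ a simple root in direction $a$, so $h$ and $D_ah$ have no common nonzero real zero. Hence this matrix is even positive definite off $\R e$, its determinant does not vanish on $\partial C$, and it has no factor $h$.

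Step 3 gives at best a lifted representation, i.e.\ a spectrahedral shadow, which is already known \cite{NS}. ``Removing the lifting variables using strictness'' names the difficulty without giving a mechanism.

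The patching in step 4 cannot help either. Take $h$ irreducible, and suppose $M_j(e)>0$, $\{M_j\geqslant0\}\supseteq C$, and the two cones agree near a single nonzero boundary point. Then $\det M_j$ vanishes on an open piece of the smooth real locus of $X$, so $h\mid\det M_j$ and $\{M_j\geqslant0\}=C_e(\det M_j)\subseteq C$. That one summand is therefore already a complete representation, so local exactness with global containment is as hard as the theorem. Certificates that merely touch $\partial C$ along one ray can never be patched finitely. Also, $X$ need not be smooth: strict hyperbolicity constrains only real lines, and $(x_0^2-x_1^2-x_2^2)(4x_0^2-x_1^2-x_2^2)$ is strictly hyperbolic but singular at $[0:1:\pm i]$.

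What is missing is a way to produce one pencil that is singular at every boundary ray simultaneously, with no auxiliary variables. The paper does this with three ingredients.

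\emph{Linear test point, positivity on the complex variety.} The paper uses real-valued Hermitian biforms $F_x(z,\overline z)=\sum_ix_i\alpha_i(z,\overline z)$ with $z\cdot\alpha=0$ on $\hat X$. For each $a\in K$, $F_a>0$ on all of $\hat X\setminus\C a$, nonreal points included via Lemma~\ref{lem:separate}. Each $F_a$ has a positive definite Hermitian quadratic zero at $[a]$, coming from the B\'ezoutian field of \cite{KummerBezout} and the curvature of $\partial C$. A Stone--Weierstrass approximation, corrected by $\Pi^2$, makes the field polynomial.

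\emph{Unique Gram matrices.} Polarization on the irreducible $\hat X\times\hat X$ makes the Hermitian coefficient matrix of a restricted biform unique in a basis of $\R[z]/(h)$ (Lemma~\ref{lem:coefficients}). Linearity in $x$ therefore passes directly to a linear pencil $M(x)$, with no Gram variables to project out. This is what replaces your steps 3--4.

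\emph{Uniform stabilization.} Write $F_a=u^*H_au$ and add $\lambda U_a$ to obtain strict positivity. Then apply D'Angelo's stabilization with full spanning (Theorem~\ref{input:stabilization}), using one exponent $N$ for all $a\in K$ by openness, monotonicity in $N$, and compactness. This gives $M(a)\geqslant0$ with kernel exactly $\C\mm_q(a)$. Distinct kernel lines force $M(e)>0$, and a segment argument yields $\{M\geqslant0\}=C$.
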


\subsection{Overview of the proof.}
Our approach is closely related to the sums-of-squares approximations
in \cite{KPV}. There, the hyperbolicity cone is realized as a linear slice of a cone of nonnegative polynomials and
nonnegativity is then replaced by a sum-of-squares condition.
Multiplication by powers of a fixed positive polynomial gives a sequence of inner approximations of the hyperbolicity cone by spectrahedral shadows.  We use the same idea, but with a different family of
polynomials. There are two main differences. First, we work with Hermitian
biforms on the complex projective variety defined by the hyperbolic polynomial. Since the Gram matrices of Hermitian biforms are unique, the resulting inner approximations are spectrahedral rather than just spectrahedral shadows.
Furthermore, we obtain a sum-of-squares certificate of one fixed degree for every boundary point, which implies that the inner approximation becomes exact.

After the initial reductions, let $h$ be irreducible, write
$C=C_e(h)$, $X\subseteq\mathbb{P}^n$ for the complex projective zero set of $h$, $\hat{X}\subseteq\C^{n+1}$ for the affine cone over $X$, and let $K$ be
the boundary of a compact base of $C$. Note that $K$ can be identified with a subset of $X(\R)$. We construct Hermitian
biforms
\[
 F_x(z,\overline z)=\sum_{i=0}^n x_i\alpha_i(z,\overline z),
\]
where the $\alpha_i$ are real-valued and
$\sum_i z_i\alpha_i=0$ on $\hat{X}$. For $a\in K$, we require $F_a$ to
be positive on $\hat{X}\setminus\C a$. At $[a]\in X$, its local
quadratic term is positive definite on the tangent space of $X$. Thus the
nonnegativity is required on the complex zero set, not just on its
real points.

In a fixed homogeneous basis
modulo $h$, a Hermitian biform on $\hat{X}$ has a unique coefficient
matrix, by polarization. The matrix therefore depends linearly on
$x$. For real sums of squares, a Gram matrix is generally not unique:
the condition is that there exists a positive semidefinite Gram
matrix satisfying the coefficient equations. Projecting out these
additional variables gives a spectrahedral shadow. In our setting
there are no additional variables. Uniqueness here is an algebraic
fact; it does not follow from nonnegativity, nor does nonnegativity
alone imply that the coefficient matrix is positive semidefinite.

To construct the biforms, we first construct a continuous field of
linear functionals on $X$. Near the real locus we use the B\'ezoutian
construction in \cite{KummerBezout}. The curvature of the boundary
gives the required positive quadratic terms. Away from the real
locus, separation supplies functionals which annihilate $z$ and
are strictly positive on $C\setminus\{0\}$. These choices can be
glued together.

We then use Stone--Weierstrass to make the field polynomial. Uniform
approximation alone would not preserve the zeros or the annihilation
identity. We therefore retain the explicit field as a fixed part
and write the correction as $\Pi^2v$, with $v$ continuous. Here
$\Pi^2$ is a scaled projection onto the real functionals annihilating
$z$; it vanishes to fourth order at the real locus. Only $v$ is
approximated, by biforms divided by powers of
$p=\sum_i|z_i|^2$. Applying $\Pi^2$ preserves the annihilation
identity and the quadratic terms exactly. The resulting error is
small compared with the quadratic lower bound near the zeros, and
strict positivity elsewhere is preserved by compactness. Clearing
the denominators gives the desired biforms.

It remains to make their coefficient matrices positive semidefinite.
The factorization theorem in \cite{DAngelo} requires
strict positivity, so it cannot be applied directly to $F_a$.
Instead, we write $F_a=u_a^*H_au_a$ on $\hat{X}$, where the entries of
$u_a$ generate the ideal of the line $\C a$. The positive quadratic
term at $[a]$ allows us to add a positive semidefinite correction
to $H_a$, without changing $u_a^*H_au_a$ on $\hat{X}$, so that the
resulting matrix $D_a$ is positive definite for every $z\ne0$.
The theorem now gives
\[
 p^ND_a=B_a^*B_a,\qquad p^NF_a=\|B_au_a\|^2\quad\text{on }\hat{X}.
\]
The spanning assertion in the theorem gives exactly the
one-dimensional kernel at $a$. For fixed $N$, this
full-spanning factorization condition on $D_a$ is open, and it is
preserved when $N$ is increased. Since the matrices $D_a$ form a
compact family, one exponent works for all $a\in K$.

The coefficient matrices of $p^NF_x$ thus form a linear pencil
$M(x)$ which is positive semidefinite on $C$ and singular on its
boundary. The kernels at distinct points of $K$ are
distinct, which implies $M(e)>0$. A segment argument then shows
that $M(x)$ cannot be positive semidefinite outside $C$. We obtain
the whole cone, not just an inner approximation. This does not
assert finite convergence of the original conditions in \cite{KPV},
since the biforms used here are different.

\section{The proof}
\subsection{Reductions and notation}\label{sec:reductions}

Real factors inherit strict
hyperbolicity, and the cone of a product is the intersection of the
factor cones. It therefore suffices to consider $h$ irreducible over $\R$, with
$h(e)=1$, $d=\deg h>1$, and $n\geqslant2$. Put $C=C_e(h)$.
We use its convexity and hyperbolicity in every interior direction
from \cite{Garding}.

Strict hyperbolicity gives $\partial_eh(b)\ne0$ at every nonzero real
zero $b$, so these points are smooth. Also $h$ is irreducible over $\C$:
otherwise real irreducibility forces $h=cg\overline g$, making every
real zero singular. Nonzero real zeros exist by hyperbolicity.

In coordinates $e=(1,0)$, let $\lambda_1(x')<\cdots<\lambda_d(x')$
be the roots of $h(t,x')$ for $x'\ne0$, and put $\lambda_j(0)=0$.
Then
\[
 C=\{(t,x')\mid t\geqslant\lambda_d(x')\},\qquad
 \lambda_j(-x')=-\lambda_{d+1-j}(x').
\]
These formulas show that $C$ is pointed: membership in $C\cap(-C)$
would give $\lambda_d(x')\leqslant t\leqslant\lambda_1(x')$ when
$x'\ne0$, and gives $t=0$ when $x'=0$.
We may thus choose coordinates
$e=(1,0)$ with compact base $B=C\cap\{x_0=1\}$.
Set $K=\partial B$ relative to this hyperplane; $B=\conv K$ by taking
lines through its interior points. Write
\[
 \hat{X}=\{z\in\C^{n+1}\mid h(z)=0\},\qquad X=\PP(\hat{X})\subseteq\PP^n,
 \qquad p(z)=\sum_{i=0}^n|z_i|^2.
\]
We identify $K$ with its image in $X(\R)$, always representing $a\in K$
by the vector with $a_0=1$. Dots denote bilinear pairings, without
conjugation; $*$ denotes conjugate transpose. We identify real vectors
and covectors using the Euclidean inner product. All projective
distances are measured in the ambient projective space using a fixed
smooth Riemannian metric.

\subsection{The geometry of the zero set}

On each real three-space $e\in W$, the results from 
\cite{HV,PV} give a pencil $T$ with
\[
 h|_W=\kappa\det T,\qquad T(e)>0,\qquad
 C\cap W=\{x\in W\mid T(x)\geqslant0\},
\]
where $\kappa>0$ because $h(e)=1$.

We first exclude lines in the zero set through boundary points,
which will give strictness in the supporting inequalities.

\begin{lemma}\label{lem:noline}
No real projective line contained in $X$ meets $K$.
\end{lemma}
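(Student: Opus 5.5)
The plan is to reduce to a real three-space through $e$ and show that a line in $X$ through a point of $K$ forces the hyperbolicity cone to be a half-space there, which contradicts pointedness. No determinantal representation is needed; only the description $C=\{t\geqslant\lambda_d(x')\}$ and pointedness from the reductions are used.

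\emph{Setting up.} Suppose a real projective line $L\subseteq X$ meets $K$ at $a$, with $a_0=1$. Its affine cone is a real $2$-dimensional subspace $U=\operatorname{span}(a,b)\subseteq\R^{n+1}$, and $h$ vanishes identically on $U$. Since $h(e)=1$, we have $e\notin U$. Therefore $W=U+\R e$ is a real three-space containing $e$, and $U$ is a hyperplane in $W$. Write $U=\{x\in W\mid \ell(x)=0\}$ for a linear form $\ell$ on $W$. Then $\ell(e)\neq0$, and we normalize $\ell(e)>0$.

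\emph{Step 1: $U$ supports $C\cap W$.} The cone $C\cap W$ has nonempty interior in $W$, since $e$ is interior to $C$. It is convex, and $h>0$ on its interior, because the interior of $C$ lies in the component of $\{h>0\}$ containing $e$. Since $h\equiv0$ on $U$, the hyperplane $U$ cannot meet the interior of $C\cap W$. As $e$ lies on the side $\{\ell>0\}$, we get $C\cap W\subseteq\{\ell\geqslant0\}$.

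\emph{Step 2: the $\ell$-root is the largest root.} For any $x\in W$, put $t_0=\ell(x)/\ell(e)$. Then $x-t_0e\in U$, so $h(x-t_0e)=0$, and $t_0$ is a root of $t\mapsto h(x-te)$. Hence $t_0\leqslant\lambda_{\max}(x)$. Conversely, $x-\lambda_{\max}(x)e\in C\cap W$, so Step 1 gives $\ell(x)-\lambda_{\max}(x)\ell(e)\geqslant0$, that is, $\lambda_{\max}(x)\leqslant t_0$. Thus $\lambda_{\max}(x)=\ell(x)/\ell(e)$ for all $x\in W$, and consequently
\[
C\cap W=\{x\in W\mid x-\lambda_{\max}(x)e\ \text{shifted by }t\geqslant0\}=\{x\in W\mid \ell(x)\geqslant0\},
\]
which is a closed half-space of $W$.

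\emph{Conclusion.} A half-space of the three-space $W$ contains the two-dimensional subspace $U$, and in particular the line $\R a$. So both $a$ and $-a$ lie in $C\cap(-C)$ with $a\neq0$. This contradicts the pointedness of $C$ established in Section~\ref{sec:reductions}.

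The only point requiring care is Step 1: it uses that the interior of $C$ is contained in $\{h>0\}$. This holds because the interior is the open connected component of $\{h>0\}$ containing $e$, which by \cite{Garding} is convex with closure $C$. Everything else is linear algebra.
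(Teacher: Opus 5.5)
There is a genuine gap, and it is in Step~2, not Step~1. Step~1 is fine. The assertion $x-\lambda_{\max}(x)e\in C\cap W$ in Step~2 is false. The roots of $t\mapsto h(x-\tau e-te)$ are those of $t\mapsto h(x-te)$ shifted by $-\tau$, so $x-\tau e\in C$ holds exactly when $\tau$ is at most the \emph{smallest} root $\lambda_{\min}(x)$. Thus $x-\lambda_{\max}(x)e$ lies in $-C$, not in $C$. Apply Step~1 to the point that really lies in $C\cap W$, namely $x-\lambda_{\min}(x)e$. You get only $\lambda_{\min}(x)\leqslant t_0$. Using $-C\cap W\subseteq\{\ell\leqslant0\}$ instead gives only $t_0\leqslant\lambda_{\max}(x)$. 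Both inequalities hold for every root $t_0$. So nothing beyond ``$t_0$ is a root'' comes out, and nothing forces $C\cap W$ to be a half-space.

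You can see that the argument cannot be correct without locating the error. It uses neither strict hyperbolicity nor the hypothesis $a\in K$, so it would show that $X$ contains no real line at all whenever $C$ is pointed. That is false. For $h=x_0x_1x_2$ and $e=(1,1,1)$, the cone is the pointed orthant. The plane $\{x_0=0\}$ lies in $\hat X(\R)$ and meets $\partial C$, and your Step~1 holds with $\ell=x_0$, yet $C$ is not a half-space. An irreducible example is $h=\det\left(\begin{smallmatrix}x_0&x_1&x_2\\ x_1&x_0&x_3\\ x_2&x_3&x_0\end{smallmatrix}\right)$. Its zero set contains the plane $\{x_1=x_0,\ x_2=x_3\}$, which meets the boundary of the elliptope cone at the points $(1,1,s,s)$ with $|s|\leqslant1$. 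Strict hyperbolicity must enter; the paper uses it through smoothness of $K$ and \cite[Remark~3.2]{NS}.

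Within your setup this is easy to supply:
\begin{itemize}
\item Since $h|_W$ vanishes on $U=\ker\ell$, write $h|_W=\ell g$, with $g$ hyperbolic with respect to $e$ of degree $d-1\geqslant1$. Then $C\cap W=\{\ell\geqslant0\}\cap C_e(g)$.
\item The closed convex cone $U\cap C_e(g)$ contains $a\neq0$. It is not all of $U$, since otherwise $U\subseteq C$, against pointedness. Hence it has a nonzero relative boundary point $b\in U$.
\item At $b$ we have $g(b)=0$, because points of $C_e(g)$ where $g\neq0$ are interior.
\item Then $t\mapsto h(b-te)=-t\,\ell(e)\,g(b-te)$ has a double root at $t=0$, although $b\notin\R e$. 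This contradicts strict hyperbolicity.
\end{itemize}
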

\begin{proof}
The real-zero polynomial $f(x')=h(1,x')$ has compact rigidly convex
set $B$, and every point of $K=\partial B$ is smooth.
By \cite[Remark~3.2]{NS}, $f$ cannot vanish identically on an affine
line through a point of $K$. This shows the claim.
\end{proof}

The next lemma identifies the real normal functionals and shows that
each can vanish on at most one boundary ray.

\begin{lemma}\label{lem:normals}
For $[b]\in X(\R)$ put
\[
 \nu_b=\frac{\nabla h(b)}{\partial_eh(b)},\qquad
 \eta(b)=\partial_eh(b)\nabla h(b).
\]
Then $\nu_b$ is nonnegative on $C$. For $a\in C\setminus\{0\}$,
\[
 \nu_b\cdot a=0
 \quad\Longleftrightarrow\quad a\in\partial C\ \hbox{and}\ [a]=[b].
\]
In particular $\eta(b)\cdot a>0$ for $a\in K$ with $[a]\ne[b]$.
\end{lemma}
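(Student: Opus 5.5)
Fix $[b]\in X(\R)$ with $b$ a nonzero real zero of $h$. Since $\partial_eh(b)\ne0$ by strict hyperbolicity, $\nu_b$ is well defined. It is unchanged under $b\mapsto tb$ for real $t\ne0$: both $\nabla h$ and $\partial_eh$ are homogeneous of degree $d-1$, so the scalar cancels. Hence $\nu_b$ depends only on $[b]$. Also $\eta(b)=\partial_eh(b)^2\,\nu_b$ is a positive multiple of $\nu_b$. The final assertion about $\eta$ therefore follows from the main equivalence applied to $a\in K\subseteq C\setminus\{0\}$.

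To prove nonnegativity and the equivalence, we work in the three-space $W=\operatorname{span}(e,a,b)$, or in a three-space containing these vectors if they are linearly dependent. The paper gives a pencil $T$ on $W$ with $h|_W=\kappa\det T$, $\kappa>0$, $T(e)>0$ and $C\cap W=\{T\geqslant0\}$. For $y\in W$, Jacobi's formula gives
\[
\nabla h(b)\cdot y=\kappa\operatorname{tr}\bigl(\operatorname{adj}T(b)\,T(y)\bigr).
\]
Because $b$ is a smooth point, $\nabla h(b)\ne0$. Hence $\operatorname{adj}T(b)\ne0$ and $T(b)$ has corank exactly one, so $\operatorname{adj}T(b)=c\,vv^*$ with $\ker T(b)=\C v$ and $c$ real and nonzero. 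Then
\[
\nu_b\cdot y=\frac{v^*T(y)v}{v^*T(e)v}.
\]
Restricted to $W$, this quantity is $\geqslant0$ on $C\cap W$ because $T(y)\geqslant0$ there. Since $a$ is arbitrary, choosing $W\ni a$ gives $\nu_b\geqslant0$ on $C$.

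For the equivalence, first let $a\in\partial C\setminus\{0\}$ with $[a]=[b]$. Then $a=tb$ and $\nu_b\cdot b=d\,h(b)/\partial_eh(b)=0$ by Euler's identity, so $\nu_b\cdot a=0$.

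Conversely, suppose $\nu_b\cdot a=0$ with $a\in C\setminus\{0\}$. Then $v^*T(a)v=0$ with $T(a)\geqslant0$, so $T(a)v=0$. Since $T(b)v=0$ as well, $T$ annihilates $v$ on the whole span of $a$ and $b$. Two cases arise.

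\emph{Case $[a]=[b]$.} Here $a=tb$ is a real zero of $h$, hence $a\in\partial C$. Indeed, $a\in C$ and $h(a)=0$ force $a\notin\inter C$, because $h>0$ on the interior.

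\emph{Case $[a]\ne[b]$.} Then $\det T$ vanishes on the real plane spanned by $a$ and $b$, so $h$ vanishes on the real projective line through $[a]$ and $[b]$. Since $h(a)=0$ and $a\in C$, we get $a\in\partial C$, and after scaling $a\in K$ (note $a_0>0$ because $C$ is pointed with base $B$). This contradicts Lemma~\ref{lem:noline}. So this case cannot occur, which proves the converse.

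The main obstacle is ensuring the three-space framework is legitimate. If $e,a,b$ are linearly dependent, any three-space containing them works. We also need that the Helton--Vinnikov pencil can be taken Hermitian (as cited) so that $\operatorname{adj}$ at a corank-one point is rank one with a sign. That sign is harmless because $c$ cancels in the formula for $\nu_b$.
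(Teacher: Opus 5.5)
Your proof is correct. For the equivalence it follows the paper's argument. You restrict to a three-space through $e,a,b$ and take the definite pencil $T$. You write $\operatorname{adj}T(b)=c\,vv^*$ with $v$ spanning $\ker T(b)$, and get $T(a)v=0$ from $v^*T(a)v=0$ and $T(a)\geqslant0$. You conclude that $h$ vanishes on the line through $[a]$ and $[b]$, which contradicts Lemma~\ref{lem:noline}.

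The difference lies in the nonnegativity claim. The paper cites the interlacing result \cite[Lemma~2.6]{KPV} for $a\in\inter C$ and passes to $\partial C$ by continuity. It then uses $\nu_b\cdot e=1$ to get strict positivity on $\inter C$, which places $a$ on $\partial C$ before the pencil argument starts. You instead read nonnegativity off the formula $\nu_b\cdot y=v^*T(y)v/v^*T(e)v$ on each three-space. This covers boundary points directly and makes the lemma self-contained, given the pencils from \cite{HV,PV} that the paper already uses. You then obtain $a\in\partial C$ afterwards from $h(a)=0$ in both cases. The paper's route keeps the link to the interlacer $\Delta_{e,a}h$ discussed right after the lemma, and it uses the three-variable determinantal theorem only for the equality case.

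One small slip: Jacobi's formula only sees the restriction of $\nabla h(b)$ to $W$, so $\nabla h(b)\ne0$ alone does not give $\operatorname{adj}T(b)\ne0$. The correct reason is $\partial_eh(b)=\kappa\operatorname{tr}\bigl(\operatorname{adj}T(b)\,T(e)\bigr)\ne0$ with $e\in W$. You noted $\partial_eh(b)\ne0$ at the start and implicitly use it in the next line, so this is easily fixed.
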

\begin{proof}
Nonnegativity follows from \cite[Lemma~2.6]{KPV} and continuity
to $\partial C$. Since $\nu_b\cdot e=1$, the nonzero covector $\nu_b$
is strictly positive on $\inter C$.

If $\nu_b\cdot a=0$, then $a\in\partial C$. Suppose $[a]\ne[b]$.
Choose a three-space containing $e,a,b$ and a pencil as above. Jacobi's formula and
$\partial_eh(b)\ne0$ imply that $T(b)$ has rank $d-1$. Its adjugate
therefore has the form $\rho vv^*$, with $\rho\ne0$ and
$0\ne v\in\ker T(b)$. Again by Jacobi's formula,
\[
 0=\partial_a h(b)=\kappa\rho\,v^*T(a)v.
\]
Since $T(a)\geqslant0$ this
gives $T(a)v=0$.
The pencil is singular on the entire span of $a,b$, giving a line
forbidden by Lemma~\ref{lem:noline}.
Conversely, proportionality and Euler's identity give
$\nabla h(b)\cdot a=0$. Finally
$\eta(b)=(\partial_eh(b))^2\nu_b$ has the same strict inequalities.
\end{proof}

The pairing $\eta(b)\cdot a$ is the restriction to $\hat{X}(\R)$ of the
polynomial used in the interlacer description in \cite{KPV}:
\[
 \Delta_{e,a}h=(\partial_eh)(\partial_ah)-h\partial_e\partial_ah,
 \qquad \Delta_{e,a}h(b)=\eta(b)\cdot a\quad(b\in \hat{X}(\R)\setminus\{0\}).
\]
A result in \cite{KPV} characterizes the closed hyperbolicity
cone by nonnegativity of $\Delta_{e,a}h$ on the whole real space.
Thus the normal field starts from the same positivity expression;
the strict equality case above additionally uses the no-line geometry.

By \cite[Lemma~2.4(ii)]{NS}, applied to the real-zero polynomial
$h(1,\cdot)$, smoothness of $K$ and Lemma~\ref{lem:noline} give
strict quasi-concavity at every $a\in K$, that is,
\begin{equation}\label{eq:curvature}
 \partial_u^2h(a)<0
 \quad\bigl(a\in K,\ u\ne0,\ u_0=0,\ \nabla h(a)\cdot u=0\bigr).
\end{equation}
Also $\partial_eh(a)>0$ for $a\in K$,
since $a$ is the largest simple root on its line parallel to $e$
and $h(e)=1$.

At nonreal zeros, separation supplies locally continuous linear
functionals that annihilate the zero and are strictly positive on the cone.

\begin{lemma}\label{lem:separate}
For nonreal $[z]\in X$, put
$P_z=\operatorname{span}_{\R}(\Re z,\Im z)$. Then $P_z\cap C=\{0\}$.
Moreover, $P_z^\perp$ contains a covector in $\inter C^\vee$.
Such covectors can be chosen continuously locally on the nonreal
locus.
\end{lemma}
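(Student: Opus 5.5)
The plan is to prove $P_z\cap C=\{0\}$ by reducing to the three-variable determinantal representation and Lemma~\ref{lem:noline}. The separation and continuity statements then follow from standard convexity arguments, using that $C$ has the compact base $B$.

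\emph{Step 1: every nonzero vector of $P_z$ is the imaginary part of a zero.} Write $z=u+iv$. Since $[z]$ is nonreal, $u$ and $v$ are linearly independent, so $P_z$ is a real plane depending only on $[z]$. Let $w\in P_z$ be nonzero. Write $w=\cos\theta\, v+\sin\theta\, u$, up to a positive factor. Then $w$ is a positive multiple of $\Im(e^{i\theta}z)$, and $e^{i\theta}z\in\hat X$. So it suffices to show: if $z'=u'+iw\in\hat X$ with $u',w$ real, then $w\notin C\setminus\{0\}$.

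\emph{Step 2: use the pencil.} Suppose $w\in C\setminus\{0\}$. Choose a real three-space $W$ containing $e,u',w$, enlarging arbitrarily if these span less. Take the pencil $T$ with $h|_W=\kappa\det T$ and $T(e)>0$. Extend $T$ complex-linearly to $W\otimes\C$. From $h(z')=0$ we get $\det\bigl(T(u')+iT(w)\bigr)=0$, so there is $\xi\neq0$ with $(T(u')+iT(w))\xi=0$. Then
\[
\xi^*T(u')\xi+i\,\xi^*T(w)\xi=0,
\]
and both quadratic forms are real because $T$ is Hermitian at real arguments. Hence $\xi^*T(w)\xi=0$. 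Since $T(w)\geqslant0$, this gives $T(w)\xi=0$, and then $T(u')\xi=0$. Therefore $T$ is singular on the whole real span of $u'$ and $w$, so $h$ vanishes on this real projective line through $[w]$.

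This yields a contradiction in both cases. If $w\in\inter C$, then $T(w)>0$, which is impossible. If $w\in\partial C\setminus\{0\}$, then $w$ rescales to a point of $K$ (recall $w_0>0$ on $C\setminus\{0\}$ since the base is compact). We get a real line in $X$ meeting $K$, contradicting Lemma~\ref{lem:noline}. This step is the only real content, and the main point to check is the reduction of the boundary case to Lemma~\ref{lem:noline} via the kernel vector.

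\emph{Step 3: separation.} Let $\pi\colon\R^{n+1}\to\R^{n+1}/P_z$ be the quotient map.

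The image $\pi(C)=\R_{\geqslant0}\pi(B)$ does not contain $0$ in $\pi(B)$, because $C\cap P_z=\{0\}$. Since $B$ is compact, $\pi(C)$ is a closed cone, and it is pointed. Indeed, $\pi(c_1)=-\pi(c_2)$ would put $c_1+c_2\in P_z\cap C$, forcing $c_1=c_2=0$ by pointedness of $C$.

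A closed pointed cone has a dual with nonempty interior. Pick $\ell$ strictly positive on $\pi(C)\setminus\{0\}$. Pulling $\ell$ back gives a covector in $P_z^\perp$ that is strictly positive on $B$, hence lies in $\inter C^\vee$.

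\emph{Step 4: local continuity.} Near a given nonreal $[z]$, choose a continuous local lift $z'$. The plane $P_{z'}$ then varies continuously, because $\Re z'$ and $\Im z'$ stay independent. Let $\pi_{z'}$ denote orthogonal projection onto $P_{z'}^\perp$, which also varies continuously. Fix $\ell$ from Step 3 for $z$ and set $\ell_{z'}=\pi_{z'}\ell$. This is continuous in $z'$, lies in $P_{z'}^\perp$, and equals $\ell$ at $z$. Since $\ell>0$ on the compact set $B$, uniform continuity gives $\ell_{z'}>0$ on $B$ for $[z']$ near $[z]$. Thus $\ell_{z'}\in\inter C^\vee$ on a neighborhood of $[z]$.
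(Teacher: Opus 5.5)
Your proof is correct. Steps 3 and 4 are essentially the paper's argument. The paper strictly separates the compact convex orthogonal projection of $B$ onto $P_z^\perp$ from $0$, so your detour through pointedness of $\pi(C)$ is not needed. It then projects a fixed $\ell$ onto $P_w^\perp$ exactly as you do.

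The genuine difference is in the proof of $P_z\cap C=\{0\}$.

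\emph{The paper's route.} It writes $z=b+ia$ with $0\neq a\in P_z\cap C$ and approximates $a$ by interior points of $C$. It then uses G{\aa}rding's theorem that $h$ is hyperbolic in every interior direction. So $t\mapsto h(ta+b)$ is a limit of real-rooted polynomials, yet it has the nonreal zero $i$; by a Hurwitz-type argument it vanishes identically. This gives a real line in $X$ through $[a]$, contradicting Lemma~\ref{lem:noline}. In the interior case this is already absurd, since the leading coefficient is $h(a)>0$.

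\emph{Your route.} You pass to a Helton--Vinnikov pencil on a three-space through $e,u',w$. You then reuse the positive-semidefinite kernel trick from the proof of Lemma~\ref{lem:normals}: a kernel vector $\xi$ of $T(u')+iT(w)$ must satisfy $\xi^*T(w)\xi=0$, so $\xi\in\ker T(w)\cap\ker T(u')$.

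\emph{Trade-off.} Your argument is purely algebraic, avoids the limiting argument, and exhibits the common kernel vector explicitly. The paper's argument needs only G{\aa}rding's results and not the determinantal representation, so for this lemma it is the more elementary one. Since the paper already relies on Helton--Vinnikov for Lemma~\ref{lem:normals}, your route adds no new dependency.

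One small point of precision: the reduced claim at the end of Step 1 is false as stated when $u'$ and $w$ are dependent. For example, take $z'=iw$ with $w\in\partial C\setminus\{0\}$. You should carry along the hypothesis that $u',w$ are linearly independent. This holds automatically because $e^{i\theta}z$ is still nonreal, and it is exactly what Step 2 uses when it speaks of a real projective line.
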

\begin{proof}
The real and imaginary parts are linearly independent. 
If $0\ne a\in P_z\cap C$, multiplying $z$ by a nonzero complex scalar
lets us write $z=b+ia$ with $a,b$ real and independent. Since we can approximate $a$ by points from the interior of $C$ and since $h$ is hyperbolic with respect to every such interior point, the polynomial $h(ta+b)$ is a limit of real-rooted polynomials. On the other hand, $h(ta+b)$ has the non-real zero $i$. Hence it is identically zero, contradicting Lemma~\ref{lem:noline}.


The orthogonal projection of $B$ onto $P_z^\perp$ is compact and convex
and does not contain zero, since $P_z\cap C=\{0\}$ and $0\notin B$.
Strict separation therefore gives
$\ell\in P_z^\perp\cap\inter C^\vee$.
The planes $P_w$ and their orthogonal projections vary continuously
near a nonreal point. Projecting this fixed $\ell$ onto $P_w^\perp$
therefore gives the desired local choice.
\end{proof}

\subsection{A polynomial field of supporting covectors}

A $(k,k)$-biform is a polynomial in two groups of variables, homogeneous
of degree $k$ in each group. We evaluate the second group at
$\overline z$. Such a polynomial scales by $|c|^{2k}$ under $z\mapsto cz$.
Its sign and zero set are therefore well-defined on points of projective space, and
division by $p^k$ makes it an honest function on projective space.

The construction in \cite{KummerBezout} gives homogeneous polynomials $v_1,\ldots,v_N$, whose residue classes span the $k$th graded part of the homogeneous coordinate ring $\R[X]=\R[x_0,\ldots,x_n]/(h)$, for some $k\in\mathbb{N}$, and symmetric matrices $A_1,\ldots,A_n$ of size $N$ such that 
\begin{equation}\label{eq:kernel}
    (x_0I_N+x_1A_1+\cdots+x_nA_n)\cdot v=0\mod (h),
\end{equation}
where $v$ has entries $v_1,\ldots,v_N$. In particular, the vector $v$ does not vanish at any (complex) point of $X$. 
For independent sets of variables $z,w$ define
\begin{align*}
 \mathcal B(z,w)&=\partial_eh(z)\partial_eh(w)\cdot\left(v(z)^tI_Nv(w),v(z)^tA_1v(w),\ldots,v(z)^tA_nv(w)\right).
\end{align*}
Also put
\[
 \rho(z)=\sum_{i=1}^N v_i(z)v_i(\bar{z}).
\]

The next lemma shows that this construction gives a polynomial field that
extends the real normals.




\begin{lemma}\label{lem:seed}
The polynomial covector $\mathcal B$ has real coefficients, is symmetric in $z,w$, and
has bidegree $(r,r)$ with $r=k+d-1$. For every $(z,w)\in \hat{X}\times \hat{X}$, one has
\[
 \mathcal B(z,w)\cdot z=\mathcal B(z,w)\cdot w=0.
\]
For real $b\in \hat{X}(\R)$,
\[
 \mathcal B(b,b)=\rho(b)\eta(b).
\]
\end{lemma}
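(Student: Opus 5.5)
The plan is to read off the algebraic statements directly from the construction, and then to get the formula at real points by differentiating the kernel relation \eqref{eq:kernel} once. Throughout write $M(x)=x_0I_N+x_1A_1+\cdots+x_nA_n$ and set $A_0=I_N$. Since \eqref{eq:kernel} holds modulo $(h)$, there is a polynomial vector $q$ with
\[
 M(x)v(x)=h(x)q(x)
\]
as a polynomial identity.

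\emph{Real coefficients, symmetry and bidegree.} The entries of $v$, the matrices $A_i$ and $h$ all have real coefficients, so $\mathcal B$ does too. Each $A_i$ is symmetric, so $v(z)^tA_iv(w)=v(w)^tA_iv(z)$. The prefactor $\partial_eh(z)\partial_eh(w)$ is symmetric as well, hence $\mathcal B(z,w)=\mathcal B(w,z)$. Each entry of $v$ is homogeneous of degree $k$ and $\partial_eh$ has degree $d-1$. Therefore every component of $\mathcal B$ is bihomogeneous of bidegree $(k+d-1,k+d-1)=(r,r)$.

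\emph{Annihilation.} Pairing with $z$ and using symmetry of the $A_i$ gives
\[
 \mathcal B(z,w)\cdot z=\partial_eh(z)\partial_eh(w)\,v(w)^tM(z)v(z)=\partial_eh(z)\partial_eh(w)\,h(z)\,v(w)^tq(z).
\]
This vanishes for $z\in\hat X$. By the symmetry just shown, $\mathcal B(z,w)\cdot w=\mathcal B(w,z)\cdot w$, which vanishes for $w\in\hat X$ by the same computation with the roles of $z$ and $w$ exchanged.

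\emph{Value at real points.} This is the only step with content. Fix a nonzero real $b\in\hat X(\R)$; if $b=0$, both sides vanish by homogeneity. Differentiate the identity $Mv=hq$ at $b$ in a direction $u$:
\[
 M(u)v(b)+M(b)\,\partial_uv(b)=\partial_uh(b)\,q(b)+h(b)\,\partial_uq(b).
\]
Multiply on the left by $v(b)^t$. The term $v(b)^tM(b)\partial_uv(b)$ equals $(M(b)v(b))^t\partial_uv(b)=0$, using $M(b)v(b)=h(b)q(b)=0$ and the symmetry of $M(b)$. The last term vanishes because $h(b)=0$. Hence, with $c=v(b)^tq(b)$,
\[
 v(b)^tM(u)v(b)=c\,\partial_uh(b)\qquad\text{for all }u.
\]
Take $u=e$. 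Since $v(b)$ is real, $v(b)^tv(b)=\rho(b)$, and this is positive because $v$ has no zeros on $X$. So $\rho(b)=c\,\partial_eh(b)$, where $\partial_eh(b)\ne0$ by strict hyperbolicity. This gives $c=\rho(b)/\partial_eh(b)$.

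Now take $u=e_i$ for $i=0,\ldots,n$. This gives $v(b)^tA_iv(b)=\rho(b)\,\partial_ih(b)/\partial_eh(b)$. Multiplying by $(\partial_eh(b))^2$ yields $\mathcal B(b,b)=\rho(b)\,\partial_eh(b)\nabla h(b)=\rho(b)\eta(b)$.

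I expect the main care to be needed in this last step: one must use both the symmetry of $M(b)$ and the vanishing $M(b)v(b)=0$ to kill the derivative of $v$. Everything else is bookkeeping.
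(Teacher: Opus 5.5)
Your proof is correct and follows the same route as the paper: you differentiate the kernel relation \eqref{eq:kernel} to show that $\mathcal B(b,b)$ is proportional to $\nabla h(b)$, then fix the scalar by pairing with $e$, where $M(e)=I_N$. Your version also spells out the step the paper leaves implicit, namely that $v(b)^tM(b)=0$, by symmetry of $M(b)$, kills the $\partial_uv(b)$ term.
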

\begin{proof}
The statements on symmetry and degree are obvious. The next statement follows from (\ref{eq:kernel}). Finally, differentiating (\ref{eq:kernel}) shows that $B(b,b)$ is parallel to $\nabla  h(b)$. We further have
    \begin{equation*}
        \langle \mathcal{B}(b,b),e\rangle=\rho(b)\partial_e h(b)^2\text{ and }\langle\nabla h(b),e\rangle=\partial_e h(b).
    \end{equation*}
    This proves that $\mathcal{B}(b,b)=\rho(b)\partial_e h(b)\nabla  h(b)$.
\end{proof}

Define the real projective covector field on $X$:
\[
 \beta_0([z])=p(z)^{-r}\mathcal B(z,\overline z).
\]
Symmetry and real coefficients make its values real. For every
$[z]\in{X}$, one has
\[
 \beta_0([z])\cdot z=0,\qquad
 \beta_0([z])\cdot\overline z=0.
\]
On the real locus it is a positive multiple of $\nu_b$.

The next lemma shows that the curvature of the boundary makes the vanishing of the normal
pairing at each boundary point quadratic and positive definite.

\begin{lemma}\label{lem:contact}
For $a\in K$, the function $\beta_0([z])\cdot a$ on $X$ has a positive definite
Hermitian quadratic zero at $[a]$.
\end{lemma}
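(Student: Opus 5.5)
The plan is to work in local coordinates on the affine cone near $a$ and study the polarized function $g(z,w)=\mathcal B(z,w)\cdot a$ on $\hat X\times\hat X$. The key observation is that the annihilation identities of Lemma~\ref{lem:seed} make $g$ vanish on $\{a\}\times\hat X$ and on $\hat X\times\{a\}$. As a consequence, the second-order term of $\beta_0([z])\cdot a$ at $[a]$ is purely mixed in $(u,\overline u)$, i.e.\ a Hermitian form. Its positivity will then be checked on real tangent vectors, where it reduces to the curvature inequality \eqref{eq:curvature}.

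\textbf{Step 1: vanishing and Hermitian form.} Since $a\in K$ is a smooth point of $X$, the projective tangent space at $[a]$ is $T=\{u\in\C^{n+1}\mid u_0=0,\ \nabla h(a)\cdot u=0\}$, which is a complement to $\C a$ in $T_a\hat X$. Choose a holomorphic parametrization $\gamma(u)=a+u+O(|u|^2)$ of $\hat X$ over $u\in T$ near $0$, with $\gamma(u)_0=1$. Because $a$ is real and $h$ has real coefficients, we may take $\gamma$ with real Taylor coefficients, so that $\overline{\gamma(u)}=\gamma(\overline u)$. Lemma~\ref{lem:seed} gives $g(a,w)=\mathcal B(a,w)\cdot a=0$ for $w\in\hat X$, and likewise $g(z,a)=0$ for $z\in\hat X$. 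Hence $G(u,v):=g(\gamma(u),\gamma(v))$ is holomorphic in $(u,v)$ and vanishes when $u=0$ or when $v=0$. Its Taylor expansion therefore contains only monomials that have positive degree in both $u$ and $v$. The lowest-order part is a bilinear form $Q(u,v)$ with real coefficients, symmetric by the symmetry of $\mathcal B$. Then
\[
 \beta_0([\gamma(u)])\cdot a=p(\gamma(u))^{-r}G(u,\overline u)
 =p(a)^{-r}Q(u,\overline u)+O(|u|^3).
\]
This shows that the zero at $[a]$ is Hermitian quadratic.

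\textbf{Step 2: reduction to real directions.} Since $Q$ is real symmetric, writing $u=x+iy$ with $x,y\in T\cap\R^{n+1}$ gives $Q(u,\overline u)=Q(x,x)+Q(y,y)$. So it suffices to show $Q(x,x)>0$ for real $0\ne x\in T$. For such $x$, $b(t)=\gamma(tx)$ is a real curve in $\hat X(\R)$ with $b(0)=a$ and $b'(0)=x$. By Lemma~\ref{lem:seed},
\[
 G(tx,tx)=\rho(b(t))\,\partial_eh(b(t))\,\nabla h(b(t))\cdot a .
\]
The factors satisfy $\rho(a)>0$ (since $v$ has no zeros on $X$) and $\partial_eh(a)>0$, and $\nabla h(a)\cdot a=d\,h(a)=0$. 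It therefore suffices to compute the second derivative of $\phi(t)=\nabla h(b(t))\cdot a$ at $t=0$, and to show $\phi'(0)=0$.

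\textbf{Step 3: the computation, which is the main point.} Using Euler's identity for the Hessian $H$, namely $H(a)a=(d-1)\nabla h(a)$, and the identity $D^3h(a)[x,x,a]=(d-2)H(a)[x,x]$, we get
\[
 \phi'(0)=(d-1)\nabla h(a)\cdot x=0,
\]
\[
 \phi''(0)=(d-2)H(a)[x,x]+(d-1)\nabla h(a)\cdot b''(0).
\]
Differentiating $h(b(t))=0$ twice gives $H(a)[x,x]+\nabla h(a)\cdot b''(0)=0$. Substituting, $\phi''(0)=-H(a)[x,x]=-\partial_x^2h(a)$, which is strictly positive by \eqref{eq:curvature}, since $x_0=0$ and $\nabla h(a)\cdot x=0$. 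Therefore
\[
 Q(x,x)=\tfrac12\rho(a)\partial_eh(a)\phi''(0)>0,
\]
up to the positive normalization. The delicate points are bookkeeping rather than conceptual: one must confirm that the parametrization can be chosen with $\overline{\gamma(u)}=\gamma(\overline u)$, and that no first-order cross term survives. The vanishing of $G$ on $\{u=0\}\cup\{v=0\}$ is exactly what rules out such cross terms.
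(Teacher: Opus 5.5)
Your proposal is correct and follows essentially the same route as the paper: a conjugation-compatible local parametrization, the annihilation identities of Lemma~\ref{lem:seed} to kill all non-mixed terms, reduction to real directions via real symmetry of the bilinear part, and the curvature inequality \eqref{eq:curvature} to make the $t^2$ coefficient positive. The one cosmetic difference is how you get $\nabla h(b(t))\cdot a=-\tfrac12\partial_x^2h(a)t^2+O(t^3)$: you differentiate twice using Euler's identities, whereas the paper Taylor-expands $h$ about $b(t)$ and evaluates at $a$.
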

\begin{proof}
We work in the affine charts $z_0=w_0=1$. Near the smooth real point
$[a]$, the holomorphic implicit function theorem gives a local
parametrization $\phi$ of $X$ with $\phi(0)=a$ and
$\phi(\overline\xi)=\overline{\phi(\xi)}$: solve for one coordinate
and use the remaining coordinates, shifted by their values at $a$.
Use this parametrization on both copies of $X$ and put
\[
 F(\xi,\zeta)=a\cdot\mathcal B(\phi(\xi),\phi(\zeta)).
\]
Lemma~\ref{lem:seed} gives $F(\xi,0)=F(0,\zeta)=0$.
Thus every term of its power series contains coordinates from both
groups, so it has no linear part and its quadratic part is $\xi^TE\zeta$.
Symmetry and compatibility with conjugation make $E$ real symmetric;
on $\zeta=\overline\xi$, this quadratic part is $\zeta^*E\zeta$.

For any $0\ne s\in\R^{n-1}$, set $b(t)=\phi(ts)$ and
$u=b'(0)\ne0$. Then $u_0=0$ and $\nabla h(a)\cdot u=0$.
Taylor's formula about $b=b(t)$ gives
\[
 0=h(a)=h(b)+\nabla h(b)\cdot(a-b)
 +\tfrac12\partial_{a-b}^2h(b)+O(\|a-b\|^3).
\]
Since $h(b)=0$, Euler's identity gives
$\nabla h(b)\cdot(a-b)=\partial_a h(b)$.
Using $b(t)=a+tu+O(t^2)$, we obtain
\[
 \partial_a h(b(t))=-\tfrac12\partial_u^2h(a)t^2+O(t^3).
\]
By Lemma~\ref{lem:seed},
\[
 F(ts,ts)=\rho(b(t))\partial_eh(b(t))\partial_a h(b(t)).
\]
Comparing coefficients of $t^2$ therefore gives
\[
 s^TEs=-\tfrac12\rho(a)\partial_eh(a)\partial_u^2h(a)>0,
\]
because $\rho(a)>0$, $\partial_eh(a)>0$, and \eqref{eq:curvature}
applies to $u$. Thus $E$ is positive definite on real vectors, real symmetry gives
 positive definiteness on
complex vectors. Finally,
\[
 \beta_0([\phi(\xi)])\cdot a
 =p(\phi(\xi))^{-r}F(\xi,\overline\xi)
 =p(a)^{-r}\xi^*E\xi+O(\|\xi\|^3).
\]
Since $p(a)^{-r}>0$, this proves the assertion.
\end{proof}

Compactness turns this local quadratic positivity into a uniform
lower bound near the real locus.

\begin{lemma}\label{lem:near}
There are a neighborhood $U$ of $X(\R)$ in $X$ and $c>0$ such that
\[
 \beta_0(z)\cdot a\geqslant c\dist(z,[a])^2
 \qquad(a\in K,\ [z]\in U).
\]
\end{lemma}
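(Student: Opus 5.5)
The plan is to study the continuous function $g(a,[z])=\beta_0([z])\cdot a$ on $K\times X$. Its key feature is that it is \emph{linear} in $a$, and for fixed $a$ it is real-analytic in $[z]$. I will split the pairs $(a,[z])$ into two regions: those with $[z]$ close to $[a]$, where Lemma~\ref{lem:contact} supplies the quadratic lower bound, and those with $[z]$ far from $[a]$ but close to $X(\R)$, where Lemma~\ref{lem:normals} supplies strict positivity. Compactness of $K$ and $X(\R)$ makes both bounds uniform.

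\emph{Step 1 (uniform quadratic bound near the diagonal).} Fix $a_0\in K$ and a local chart $\phi$ of $X$ around $[a_0]$ as in the proof of Lemma~\ref{lem:contact}. For $a\in K$ near $a_0$, let $\xi_a$ be the chart coordinate of $[a]$. By Lemma~\ref{lem:contact} applied to $a$ itself, the function $\xi\mapsto g(a,[\phi(\xi)])$ vanishes at $\xi_a$ together with its first derivatives. Its second-order term there is a Hermitian form $Q_a$, positive definite and depending continuously on $a$. This continuity holds because $g$ is linear in $a$, and the real and complex derivatives of $\beta_0\circ\phi$ at $\xi_a$ vary continuously with $\xi_a$. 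Since $Q_{a_0}>0$, there are a neighbourhood $V$ of $a_0$ in $K$ and $c_1>0$ with $Q_a\geqslant 2c_1$ for $a\in V$. Taylor's theorem with remainder, whose third derivatives are bounded uniformly on a compact chart neighbourhood, gives $\varepsilon>0$ with
\[
g(a,[z])\geqslant c_1\dist([z],[a])^2\quad\text{whenever } a\in V \text{ and } \dist([z],[a])<\varepsilon.
\]
Here we use that chart distance and projective distance are comparable. Covering $K$ by finitely many such $V$ yields uniform constants $c_1,\varepsilon>0$ valid for all $a\in K$.

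\emph{Step 2 (far from the diagonal).} Consider the compact set
\[
S=\{(a,[b]) : a\in K,\ [b]\in X(\R),\ \dist([b],[a])\geqslant\varepsilon\}.
\]
On $S$ we have $g(a,[b])=p(b)^{-r}\rho(b)\,\eta(b)\cdot a$ by Lemma~\ref{lem:seed}. Here $\rho(b)>0$ because $v$ does not vanish on $X$, and $\eta(b)\cdot a>0$ by Lemma~\ref{lem:normals}, since $[a]\neq[b]$. Hence $g\geqslant m>0$ on $S$. By uniform continuity of $g$ on the compact set $K\times X$, there is $\delta>0$ such that $g(a,[z])\geqslant m/2$ whenever $\dist([z],X(\R))<\delta$ and $\dist([z],[a])\geqslant\varepsilon$. (Shrink $\delta$ so that the nearby real point still has distance at least $\varepsilon/2$ from $[a]$, and run the same argument with $\varepsilon/2$ in place of $\varepsilon$.) Since projective distances are bounded by the diameter $D$ of $\PP^n$, this gives $g\geqslant (m/2D^2)\dist([z],[a])^2$ in this region.

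\emph{Step 3.} Take $U$ to be the $\delta$-neighbourhood of $X(\R)$ in $X$ and $c=\min(c_1,m/2D^2)$. Every pair with $[z]\in U$ falls under Step 1 or Step 2.

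I expect the main obstacle to be the uniformity in Step 1. Lemma~\ref{lem:contact} is pointwise in $a$, and its chart is centred at $a$. The fix is to use one fixed chart per $a_0$ and expand at the moving point $\xi_a$. Linearity in $a$, together with the fact that the vanishing of the value and of the first derivatives at $[a]$ holds for every $a\in K$, then makes the Hessian and the Taylor remainder continuous in $a$.
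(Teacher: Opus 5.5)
Your argument is correct and is essentially the paper's proof: a uniform quadratic lower bound near the diagonal $\{(a,[a])\}$ from Lemma~\ref{lem:contact} plus compactness, and uniform positivity on real pairs away from the diagonal from Lemma~\ref{lem:normals}, extended to a neighborhood of $X(\R)$ by continuity, with the two regions combined using boundedness of projective distances. Your device of fixing one chart per $a_0$ and expanding at the moving point $\xi_a$ just makes explicit the uniformity that the paper asserts for ``finitely many local charts'' (your final $c_1$ should also absorb the chart-to-projective comparability constant).
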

\begin{proof}
The diagonal $\{(a,[a])\mid a\in K\}$ is compact. In finitely many
local charts, the positive quadratic forms of Lemma~\ref{lem:contact}
have a uniform positive lower eigenvalue bound, while their Taylor
remainders have uniform cubic bounds. Local coordinates and ambient
distance are uniformly comparable in smaller charts. Thus the stated
estimate holds whenever $\dist(z,[a])$ is sufficiently small, with
constants independent of $a$.

For real $[b]$ outside a fixed small neighborhood of $[a]$,
Lemma~\ref{lem:normals} gives $\beta_0(b)\cdot a>0$.
These pairs form a compact set, so their values have a positive
minimum. By continuity, a smaller neighborhood $U$ of the real locus
preserves half this minimum for the corresponding pairs with $[z]\in U$.
Since projective distances are bounded, this positive constant also
bounds a positive multiple of $\dist(z,[a])^2$. Combining the two
estimates proves the assertion.
\end{proof}

We now glue the field near the real locus to the separating
functionals at nonreal points to obtain a continuous field on all of $X$.

\begin{lemma}\label{lem:continuous}
There is a continuous real covector field $\beta$ on $X$ such that
$\beta([z])\cdot z=0$ for every $[z]\in X$. It equals $\beta_0$ on
a neighborhood of $X(\R)$ and belongs to $\inter C^\vee$ at every
nonreal point.
\end{lemma}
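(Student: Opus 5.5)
The plan is to glue $\beta_0$, used near the real locus, to the local separating covectors of Lemma~\ref{lem:separate} with a partition of unity. The structural observation is this: for a real covector $\ell$, the condition $\ell\cdot z=0$ is equivalent to $\ell\in P_z^\perp$, a real linear subspace. Hence at every nonreal $[z]$ the admissible set $P_z^\perp\cap\inter C^\vee$ is convex, and any convex combination of admissible covectors is again admissible. So it suffices to check that every local piece is admissible wherever its weight is nonzero.

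\textbf{Step 1: $\beta_0$ is already admissible at nonreal points of $U$.} Take $U$ and $c$ from Lemma~\ref{lem:near}, and let $[z]\in U$ be nonreal. For every $a\in K$ the point $[a]$ is real, so $\dist(z,[a])>0$, and Lemma~\ref{lem:near} gives $\beta_0([z])\cdot a>0$. Since $B=\conv K$, the functional $\beta_0([z])$ is strictly positive on $B$. Because $B$ is a compact base of $C$, it is then strictly positive on $C\setminus\{0\}$, that is, $\beta_0([z])\in\inter C^\vee$. The annihilation $\beta_0([z])\cdot z=0$ was noted after the definition of $\beta_0$. This step is where the real work lies: it uses the uniform quadratic lower bound, and hence the curvature and no-line geometry. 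Without it, $\beta_0$ could fail to be positive at nonreal points arbitrarily close to the real locus, and the gluing would break down there.

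\textbf{Step 2: cover the rest.} Choose open neighborhoods $U''\subset\overline{U''}\subset U'\subset\overline{U'}\subset U$ of the compact set $X(\R)$. The set $X\setminus U''$ is compact and contained in the nonreal locus. By Lemma~\ref{lem:separate}, each of its points has an open neighborhood $V$, contained in the nonreal locus, carrying a continuous choice $\ell_V([w])\in P_w^\perp\cap\inter C^\vee$. Finitely many such sets $V_1,\dots,V_m$ cover $X\setminus U'$, and we shrink them so that they are disjoint from $\overline{U''}$.

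\textbf{Step 3: glue.} Take a continuous partition of unity $\phi_0,\phi_1,\dots,\phi_m$ on $X$ subordinate to the open cover $\{U,V_1,\dots,V_m\}$. Then $\phi_0\equiv1$ on $U''$, because the $V_j$ avoid $U''$. Define
\[
 \beta=\phi_0\beta_0+\sum_{j=1}^m\phi_j\ell_{V_j},
\]
with each term extended by zero outside the support of its weight. This field is continuous and real, and it equals $\beta_0$ on the neighborhood $U''$ of $X(\R)$. Each term annihilates $z$ wherever it is nonzero, so $\beta([z])\cdot z=0$ everywhere, including the real points, where only $\beta_0$ contributes. At a nonreal point, every term with positive weight lies in the convex set $P_z^\perp\cap\inter C^\vee$: for $\beta_0$ this is Step~1, since $\operatorname{supp}\phi_0\subset U$, and for the $\ell_{V_j}$ it is Lemma~\ref{lem:separate}. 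Hence $\beta([z])\in\inter C^\vee$.
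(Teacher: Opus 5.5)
Your proof is correct and follows essentially the same route as the paper: Lemma~\ref{lem:near} together with $B=\conv K$ puts $\beta_0$ in $\inter C^\vee$ at nonreal points of $U$, and a partition of unity glues it to the separating fields of Lemma~\ref{lem:separate}, with convexity of $\inter C^\vee$ handling the combination. The only difference is technical: you take a partition of unity whose closed supports lie inside the open sets, so continuity of the zero-extended terms is automatic, whereas the paper uses the explicit weights $d_j/\sum_i d_i$ and therefore needs the bounds $\|\ell_j\|\leqslant\|\ell_j^0\|$ to get continuity.
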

\begin{proof}
Let $U$ be the open neighborhood from Lemma~\ref{lem:near}.
At every nonreal point of $U$, that lemma makes $\beta_0$ strictly
positive on $K$, hence on $B$. Compactness of $B$ therefore gives
$\beta_0\in\inter C^\vee$ there. If $U=X$, take $\beta=\beta_0$.
Otherwise choose an open neighborhood $W$ of $X(\R)$ with
$\overline W\subseteq U$.

By Lemma~\ref{lem:separate}, the compact set $X\setminus U$ has a
finite cover by open patches $V_1,\ldots,V_m$ contained in
$X\setminus\overline W$, carrying continuous real covector fields
$\ell_j([z])\in P_z^\perp\cap\inter C^\vee$.
These fields are obtained by projecting fixed covectors $\ell_j^0$,
so $\|\ell_j([z])\|\leqslant\|\ell_j^0\|$ on $V_j$.

For the finite open cover $\{U,V_1,\ldots,V_m\}$, 
let $d_0,d_1,\ldots,d_m$ be the distances to the respective complements of $U,V_1,\ldots,V_m$ and define $$\psi_j(x)=\frac{d_j(x)}{\sum_{j=0}^md_j(x)}.$$
The $\psi_j$ are continuous functions on $X$ that
sum to one, and each is zero outside its assigned open set.
Define
\[
 \beta=\psi_0\beta_0+\sum_{j=1}^m\psi_j\ell_j,
\]
extending each $\psi_j\ell_j$ by zero outside $V_j$.
The bounds on $\ell_j$ ensure that these extensions are continuous,
so $\beta$ is a continuous real covector field.
Every contributing field annihilates $z$, hence $\beta([z])\cdot z=0$.
On $W$ all $\psi_j$ with $j\geqslant1$ vanish, so $\psi_0=1$ and
$\beta=\beta_0$. At a nonreal point, every field with nonzero weight
belongs to the convex set $\inter C^\vee$, and therefore so does
$\beta$.
\end{proof}

The following projection will preserve the identity $\beta([z])\cdot z=0$  on $X$ when approximating the continuous field $\beta$ by polynomials without changing the quadratic terms at real points.

\begin{lemma}\label{lem:projection}
For $p=\sum_i|z_i|^2$ as above we define
\[
 J=\frac{z\overline z^T-\overline z z^T}{2ip},\qquad
 \delta=-\tfrac12\operatorname{tr}(J^2),\quad\text{and }\quad \Pi=\delta I+J^2.
\]
At nonreal points, one has $\delta>0$ and
\[
 \Pi=\delta\pr_{P_z^\perp},\qquad
 \Pi^2=\delta\Pi=\delta^2\pr_{P_z^\perp}.
\]
The matrix $Q=p^2\Pi$ has real-valued
$(2,2)$-biform entries. Moreover, uniformly near $X(\R)$,
\[
 \delta=O(\dist(z,X(\R))^2),\qquad
 \Pi^2=O(\dist(z,X(\R))^4).
\]
\end{lemma}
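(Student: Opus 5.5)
The plan is to reduce everything to the real $2$-plane $P_z=\operatorname{span}_{\R}(\Re z,\Im z)$. Write $z=x+iy$ with $x,y$ real. A direct expansion gives $z\overline z^T-\overline zz^T=-2i(xy^T-yx^T)$, so
\[
 J=-\frac{xy^T-yx^T}{p},\qquad p=\|x\|^2+\|y\|^2 .
\]
Thus $J$ is a real skew-symmetric matrix of rank at most two, acting only on $P_z$ and vanishing on $P_z^\perp$.

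At nonreal points, $x,y$ are linearly independent (as in the proof of Lemma~\ref{lem:separate}). Choose an orthonormal basis $f_1,f_2$ of $P_z$. In this basis $J$ restricted to $P_z$ is $\begin{pmatrix}0&-\mu\\ \mu&0\end{pmatrix}$ with $\mu=\pm\|x\wedge y\|/p\neq0$. Hence
\[
 J^2=-\mu^2\pr_{P_z},\qquad \operatorname{tr}(J^2)=-2\mu^2,\qquad \delta=\mu^2>0 .
\]
It follows that $\Pi=\delta I-\delta\pr_{P_z}=\delta\pr_{P_z^\perp}$. Squaring, and using that projections are idempotent, gives $\Pi^2=\delta^2\pr_{P_z^\perp}=\delta\Pi$.

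For the biform claim, the entries of $z\overline z^T-\overline zz^T$ are $z_j\overline z_k-\overline z_jz_k$, which are $(1,1)$-biforms. Consequently $p^2J^2=-\tfrac14(z\overline z^T-\overline zz^T)^2$ has $(2,2)$-biform entries, and so does $p^2\delta=-\tfrac12\operatorname{tr}(p^2J^2)$. Since $p$ itself is a $(1,1)$-biform, $Q=p^2\delta I+p^2J^2$ has $(2,2)$-biform entries. These entries are real-valued because the real form of $J$ above shows that $J$ is real. This formula also covers real points: there $x\parallel y$, so $J=0$ and the identities hold trivially with $\delta=0$.

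The main step is the estimate near $X(\R)$. We have
\[
 \delta=\frac{\|x\|^2\|y\|^2-(x\cdot y)^2}{p^2},
\]
which is invariant under $z\mapsto cz$ and hence a smooth function on $\PP^n$. It vanishes exactly on the real points of $\PP^n$ (those with $P_z$ of dimension at most one). To bound it, normalize a representative so that $x\cdot y=0$ and $\|x\|\geq\|y\|$; this is possible by rotating the phase. Then $\delta=\|x\|^2\|y\|^2/p^2\leq\|y\|^2/\|x\|^2$, and $\|y\|/\|x\|$ is comparable to the distance from $[z]$ to the real point $[x]$ in a fixed chart. Therefore $\delta=O(\dist(z,\PP^n(\R))^2)$.

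Converting this to distance to $X(\R)$ is the step I expect to need care. For $[z]\in X$ close to $X(\R)$, the nearest real point $[x]$ of $\PP^n$ need not lie on $X$. Since the real points of $X$ are smooth, however, and $X$ is defined over $\R$, near a real point $[a]$ the conjugation-compatible local parametrization $\phi$ from Lemma~\ref{lem:contact} shows that $[\phi(\xi)]$ lies within $O(\|\Im\xi\|)$ of the real point $[\phi(\Re\xi)]\in X(\R)$. So on a neighborhood of the compact set $X(\R)$, the distances to $\PP^n(\R)$ and to $X(\R)$ are comparable, with uniform constants by compactness. Combining this with $\|\pr_{P_z^\perp}\|\leq1$ gives $\Pi^2=O(\delta^2)=O(\dist(z,X(\R))^4)$ uniformly near $X(\R)$.
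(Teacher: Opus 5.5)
Your treatment of the projection identity and of the biform claim is the paper's argument: the real skew matrix $pJ$ squares to $-\Delta$ on $P_z$ and to $0$ on $P_z^\perp$, and $Q=(pJ)^2-\tfrac12\operatorname{tr}\bigl((pJ)^2\bigr)I$ has real-valued $(2,2)$-biform entries. For the distance estimate you take a different route, a phase normalization ($x\cdot y=0$, $\|x\|\geqslant\|y\|$) giving $\delta\leqslant\|y\|^2/\|x\|^2$. The paper instead notes that the entries of $J$ are smooth on all of $\PP^n$ and vanish on $\PP^n(\R)\supseteq X(\R)$. Bounded derivatives and comparison with a nearest point of $X(\R)$ then give $J=O(\dist(z,X(\R)))$ directly, and $\delta$ and $\Pi$ are quadratic in $J$. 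Your route gives sharper information: in the Fubini--Study metric it yields exactly $\delta=\tfrac14\sin^2(2d)$, with $d$ the distance to $\PP^n(\R)$. The paper's route is shorter and needs no normal form.

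Two steps in your estimate need fixing.

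\emph{A missing inference.} You write ``therefore $\delta=O(\dist(z,\PP^n(\R))^2)$'', but $\delta\lesssim\dist([z],[x])^2$ alone does not give this. You also need $\dist([z],[x])\lesssim\dist(z,\PP^n(\R))$, i.e.\ that $[x]$ is, up to constants, a nearest real point. This is true. For $\|z\|=1$ and a real unit vector $r$, the Fubini--Study cosine is $|z^*r|=\sqrt{r^T(xx^T+yy^T)r}$. Since $x\perp y$ and $\|x\|\geqslant\|y\|$, this is maximized at $r=x/\|x\|$. Any fixed Riemannian metric is bi-Lipschitz to Fubini--Study. You must state this, or replace it by the paper's Lipschitz argument.

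\emph{A superfluous step.} The conversion you flagged as delicate is unnecessary. Since $X(\R)\subseteq\PP^n(\R)$, you have $\dist(z,\PP^n(\R))\leqslant\dist(z,X(\R))$, which is exactly the direction an upper bound requires. Moreover, your parametrization argument as written only proves $\dist(z,X(\R))\lesssim\|\Im\xi\|$, and that direction would not help in any case. Drop that paragraph, add the nearest-point justification, and the proof is complete.
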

\begin{proof}
The projection identity follows from the rank-two case of the
Pl\"ucker-to-projection formula in \cite[Corollary~2.5]{DFRS},
by taking the complementary projection and multiplying by $\delta$.
We give the elementary calculation here.
Write $z=x+iy$ and put $A=pJ=yx^T-xy^T$.
At a nonreal point, $x,y$ are independent. On their span,
the square of the skew-symmetric matrix $A$ is $-\Delta I$, where
$\Delta=\|x\|^2\|y\|^2-(x\cdot y)^2$; on its orthogonal complement
it is zero. This follows, for example, by writing
$x=se_1$, $y=te_1+ve_2$ in an orthonormal basis: the nonzero block is
$\left(\begin{smallmatrix}0&-sv\\sv&0\end{smallmatrix}\right)$.
Thus $\delta=\Delta/p^2$ and $\Pi=\delta\pr_{P_z^\perp}$.
Squaring this projection identity gives the stated formula for $\Pi^2$.
At real points, $x,y$ are dependent and $J=\delta=\Pi=0$.
Moreover,
\[
 A=\frac{z\overline z^T-\overline z z^T}{2i},\qquad
 Q=A^2-\tfrac12\operatorname{tr}(A^2)I.
\]
The entries of $A$ are real-valued $(1,1)$-biforms, so this formula
shows directly that those of $Q$ are real-valued $(2,2)$-biforms.

The entries of the matrix $J$ are smooth real valued functions on the whole ambient projective space and they
vanish on its real locus. Boundedness of their derivatives and
comparison with a nearest point of $X(\R)$ therefore give
$J=O(\dist(z,X(\R)))$, uniformly. Both $\delta$ and $\Pi$ are
quadratic in $J$, so $\delta=O(\dist(z,X(\R))^2)$ and
$\Pi^2=O(\dist(z,X(\R))^4)$, as claimed.
\end{proof}

We next show that normalized biforms provide the uniform polynomial approximations
needed to replace the continuous field.

\begin{lemma}\label{lem:approx}
Every continuous real vector-valued function $v$ on $X$ can be
approximated uniformly by $P(z,\overline z)/p(z)^j$, where all entries
of $P$ are real-valued $(j,j)$-biforms.
\end{lemma}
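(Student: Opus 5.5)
The plan is to apply the real Stone--Weierstrass theorem componentwise on the compact Hausdorff space $X$. This reduces the lemma to the scalar case, since a vector-valued $v$ is approximated entry by entry. Consider the set
\[
 \mathcal A=\Bigl\{P(z,\overline z)/p(z)^j \;\Bigm|\; j\geqslant0,\ P \text{ a real-valued } (j,j)\text{-biform}\Bigr\}.
\]
By the remarks at the start of this subsection, each such quotient is invariant under $z\mapsto cz$. It is therefore a well-defined continuous real function on $\PP^n$, and hence on $X$.

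\textbf{Step 1: $\mathcal A$ is a unital subalgebra.}
\begin{itemize}
\item Constants are obtained with $j=0$.
\item For sums, bring two elements to a common exponent using
\[
 P/p^j=(Pp^{m-j})/p^m,
\]
which is legitimate because $p$ is itself a real-valued $(1,1)$-biform.
\item For products, note that $P/p^j\cdot P'/p^{j'}=PP'/p^{j+j'}$, and $PP'$ is a real-valued $(j+j',j+j')$-biform.
\end{itemize}
Thus $\mathcal A$ is a real subalgebra of $C(X,\R)$ containing the constants.

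\textbf{Step 2: $\mathcal A$ separates points of $X$.} Take $[z]\ne[w]$ in $X$, with representatives normalized so that $p(z)=p(w)=1$.

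I would use the real-valued $(1,1)$-biforms $z_k\overline z_l+z_l\overline z_k$ and $i(z_k\overline z_l-z_l\overline z_k)$. Their real span consists exactly of the forms $z^*Hz$ with $H$ Hermitian. The corresponding elements of $\mathcal A$ are $z^*Hz/p(z)$, and together they recover the projector $zz^*/p(z)$.

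This projector determines the line $\C z$. Hence $zz^*\ne ww^*$, so some Hermitian $H$ satisfies $\operatorname{tr}(Hzz^*)\ne\operatorname{tr}(Hww^*)$. That element of $\mathcal A$ separates $[z]$ and $[w]$.

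\textbf{Step 3: conclude.} Stone--Weierstrass now shows that $\mathcal A$ is dense in $C(X,\R)$ in the uniform norm. Approximate each entry $v_i$ of $v$ by some $P_i/p^{j_i}$. Then pass to the common exponent $j=\max_i j_i$ by multiplying numerators by powers of $p$, as in Step 1. This gives a single $(j,j)$-biform vector $P$ with $\|v-P/p^j\|_\infty$ arbitrarily small.

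I expect no serious obstacle. The only points needing care are that the generated functions are genuinely real-valued, that they descend to the projective variety, and that they separate points. All three are handled by the Hermitian $(1,1)$-biforms above, using the fact that a line is determined by its rank-one projector.
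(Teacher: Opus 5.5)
Your proof is correct and follows the paper's argument. Both apply Stone--Weierstrass on the compact space $X$, separate points using the rank-one projector $z\overline z^T/p$ assembled from real-valued Hermitian $(1,1)$-quotients, and pass to a common denominator $p^j$ by multiplying numerators by powers of $p$. The only cosmetic difference is that you define the algebra as all quotients $P/p^j$ and check closure directly, whereas the paper takes the algebra generated by the real and imaginary parts of $z_i\overline z_j/p$ and then observes that its elements have this form.
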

\begin{proof}
The real and imaginary parts of $z_i\overline z_j/p$ generate a unital
real algebra of continuous functions on $X$. They separate projective
points: the rank-one orthogonal projection $z\overline z^T/p$ determines
the line $\C z$. Stone--Weierstrass therefore gives uniform density.
Each polynomial in these generators is a sum of biforms divided by
powers of $p$. Multiplying numerators by suitable powers of $p$ gives
a common denominator $p^j$ and bidegree $(j,j)$. Apply this to each
component, again choosing a common degree.
\end{proof}

Finally, we combine this approximation with the projection in order to get a polynomial field
with the required positivity, annihilation, and quadratic vanishing.

\begin{lemma}\label{lem:alpha}
For some $k\geqslant\max(d,2)$ there is a real-valued $(k,k)$-biform
vector $\alpha(z,\overline z)$ such that on $X$ we have
\[
 z\cdot\alpha(z,\overline z)=0,
 \qquad F_a(z,\overline z)\coloneqq a\cdot\alpha(z,\overline z)>0
 \quad([z]\ne[a],\ a\in K).
\]
For every $a\in K$, the function $F_a$ has a positive definite Hermitian quadratic
zero at $[a]$.
\end{lemma}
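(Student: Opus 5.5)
The plan follows the overview. Keep $\beta_0$, which is explicit, and correct the continuous field $\beta$ of Lemma~\ref{lem:continuous} only through the projection of Lemma~\ref{lem:projection}. At nonreal points $\Pi^2=\delta^2\pr_{P_z^\perp}$, so define the continuous vector field
\[
 v([z])=\delta^{-2}\bigl(\beta([z])-\beta_0([z])\bigr)
\]
on the nonreal locus, and $v=0$ on the neighborhood of $X(\R)$ where $\beta=\beta_0$. This $v$ is continuous on $X$. Both $\beta$ and $\beta_0$ annihilate $z$ and $\overline z$ and are real, so $\beta-\beta_0\in P_z^\perp$. Hence $\beta=\beta_0+\Pi^2 v$ exactly on $X$. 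By Lemma~\ref{lem:approx}, choose $P/p^j$ with $\|v-P/p^j\|_\infty<\varepsilon$, and put
\[
 \tilde\beta=\beta_0+\Pi^2P/p^j .
\]
Since $\Pi^2$ annihilates $z$ (its range lies in $P_z^\perp$, and $\Pi=0$ at real points), we get $\tilde\beta\cdot z=0$ on $X$.

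Positivity is checked in two regimes. Near $X(\R)$, in the neighborhood $U$ of Lemma~\ref{lem:near}, write
\[
 \tilde\beta\cdot a=\beta_0\cdot a+a\cdot\Pi^2(P/p^j-v),
\]
where $v=0$ on a smaller neighborhood $W$. On $W$ the error is $O(\varepsilon\,\dist(z,X(\R))^4)$ by Lemma~\ref{lem:projection}, and $\dist(z,X(\R))\le\dist(z,[a])$. Shrinking $W$ so that $O(\dist^4)\le (c/2)\dist(z,[a])^2$, the lower bound $c\,\dist(z,[a])^2$ survives for all $a\in K$ uniformly. This also shows that the quadratic zero at $[a]$ is unchanged, since the correction is $O(|\xi|^4)$. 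On the compact complement $X\setminus W$, all points are nonreal, and there $\tilde\beta=\beta+\Pi^2(P/p^j-v)$ with $\beta\in\inter C^\vee$. Compactness gives $\beta\cdot a\ge m>0$ for $a\in K$, and $\|\Pi^2\|$ is bounded, so $\varepsilon$ small keeps $\tilde\beta\cdot a>0$. At real points $[b]\ne[a]$, positivity is Lemma~\ref{lem:normals}, which is already covered by the first regime through Lemma~\ref{lem:near}.

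Finally, clear denominators. Recall $\beta_0=\mathcal B(z,\overline z)/p^r$ and $\Pi^2=Q^2/p^4$ with $Q$ having $(2,2)$-biform entries. Take $k=\max(r,j+4,d,2)$ and set
\[
 \alpha=p^{k}\tilde\beta=p^{k-r}\mathcal B(z,\overline z)+p^{k-j-4}Q^2P .
\]
This is a real-valued $(k,k)$-biform vector, and multiplying by $p^k>0$ preserves the annihilation identity, the strict positivity, and the positive definiteness of the Hermitian quadratic zero. Here one checks $r=k_{\text{Bez}}+d-1\ge d$ only if needed; otherwise enlarge $k$.

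The main obstacle is the uniform estimate near the real locus. The error term must be dominated by $\dist(z,[a])^2$ simultaneously for all $a\in K$, including $a$ far from the nearest real point. This is why the fourth-order vanishing of $\Pi^2$ is used with $\dist(z,X(\R))\le\dist(z,[a])$, rather than merely the smallness of $\varepsilon$. A secondary check is continuity of $v$ at the boundary of the region where $\beta=\beta_0$; it holds because $\beta-\beta_0$ vanishes identically on a neighborhood of $X(\R)$, where $\delta$ degenerates.
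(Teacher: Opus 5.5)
Your proposal is correct and follows the paper's proof essentially step for step. It uses the same decomposition $\beta=\beta_0+\Pi^2v$ with $v=(\beta-\beta_0)/\delta^2$ extended by zero, approximates only $v$ via Lemma~\ref{lem:approx}, splits the positivity check into the same two regimes (using the fourth-order vanishing of $\Pi^2$ together with $\dist(z,X(\R))\leqslant\dist(z,[a])$), and clears denominators as $\alpha=p^{k-r}\mathcal B+p^{k-j-4}Q^2P$. The only cosmetic difference is that you absorb the $\varepsilon\,\dist^4$ error by shrinking $W$, whereas the paper bounds $\dist^4$ by a constant times $\dist^2$ on projective space and takes $\varepsilon$ small; either works.
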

\begin{proof}
Off the real locus put $v=(\beta-\beta_0)/\delta^2$. Extend it by zero
near the real locus, where its numerator is identically zero.
This gives a continuous function.
Both $\beta$ and $\beta_0$ take values in $P_z^\perp$, so Lemma~\ref{lem:projection} gives
$\beta=\beta_0+\Pi^2v$.

Choose 
$P$ with entries
real-valued $(j,j)$-biforms such that
 $v'=P/p^j$ satisfies $\|v'-v\|_\infty<\epsilon$ and set
$\beta'=\beta_0+\Pi^2v'$. It still satisfies $\beta'([z])\cdot z=0$, and
\[
 \|\beta'-\beta\|
  =\|\Pi^2v'-\Pi^2v \|
 \leqslant\delta^2\|v'-v\|
 \leqslant\epsilon\delta^2,
\]
where we used $\Pi^2=\delta^2\pr_{P_z^\perp}$.
Let $M=\max_{a\in K}\|a\|$. On a neighborhood where $\beta=\beta_0$,
the preceding lemmas and $\dist(z,X(\R))\leqslant\dist(z,[a])$ imply
\[
 \beta'(z)\cdot a\geqslant
 \bigl(c-\epsilon M L\bigr)\dist(z,[a])^2
\]
for a fixed $L$: a fourth power of distance is bounded by a constant
times its square on projective space. Choose $\epsilon$ so the
coefficient is positive. On the compact complement, $\beta(z)\cdot a$
has a positive minimum over all $a\in K$; decreasing $\epsilon$ preserves
that minimum as well. On the real locus the correction is zero.
The correction vanishes to fourth order, so the quadratic term at
$[a]$ is unchanged.

Finally choose $k\geqslant\max(r,j+4,d,2)$. With $Q=p^2\Pi$, define
\[
 \alpha(z,\overline z)=p^k\beta'
 =p^{k-r}\mathcal B(z,\overline z)+p^{k-j-4}Q^2P.
\]
This is a polynomial vector of the claimed bidegree. Multiplication
by $p^k$ preserves all signs and multiplies the quadratic term at
$a$ by a positive constant.
\end{proof}

\subsection{From biforms to coefficient matrices}

Polarization lets us replace identities involving complex conjugates
by identities in two independent points of $\hat{X}$.

\begin{lemma}\label{lem:polarize}
If a polynomial $G\in\C[z,w]$ satisfies $G(z,\overline z)=0$
for every $z\in \hat{X}$, then $G=0$ on $\hat{X}\times \hat{X}$.
\end{lemma}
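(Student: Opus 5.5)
The plan is to work locally near a smooth real point of $\hat X$ and use the standard fact that a holomorphic function of $(\xi,\zeta)$ vanishing on the anti-diagonal $\zeta=\overline\xi$ vanishes identically. Since $h$ is irreducible over $\C$ (Section~\ref{sec:reductions}), $\hat X$ is an irreducible affine variety, so $\hat X\times\hat X$ is irreducible. A polynomial vanishing on a nonempty Euclidean-open subset of the smooth locus of an irreducible variety therefore vanishes on the whole variety. It thus suffices to show that $G$ vanishes on an open neighbourhood of $(a,a)$ in $\hat X\times\hat X$ for one smooth real point $a$.

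Concretely, fix a nonzero real zero $a$ of $h$. These exist and are smooth, since $\partial_eh(a)\ne0$. Near $a$, use a holomorphic parametrization $\phi\colon\Omega\to\hat X$ of the affine cone, as in the proof of Lemma~\ref{lem:contact}, now including the scaling direction. Take $\Omega\subseteq\C^n$ a connected neighbourhood of $0$ that is stable under conjugation, with $\phi(0)=a$ and $\phi(\overline\xi)=\overline{\phi(\xi)}$. This is obtained by solving $h=0$ for one coordinate via the holomorphic implicit function theorem, using a real-coefficient defining equation. Then set
\[
 g(\xi,\zeta)=G(\phi(\xi),\overline{\phi(\overline\zeta)})=G(\phi(\xi),\phi(\zeta)),
\]
which is holomorphic on $\Omega\times\Omega$. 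The hypothesis gives $g(\xi,\overline\xi)=G(\phi(\xi),\overline{\phi(\xi)})=0$ for all $\xi\in\Omega$.

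The key step is the identity-theorem argument. Expand $g(\xi,\zeta)=\sum c_{\mu\nu}\xi^\mu\zeta^\nu$ near $0$. Then $g(\xi,\overline\xi)=\sum c_{\mu\nu}\xi^\mu\overline\xi^\nu$ vanishes identically as a real-analytic function. Applying $\partial_\xi^\mu\partial_{\overline\xi}^\nu$ at $0$ kills every coefficient, because $\xi$ and $\overline\xi$ are independent Wirtinger variables. Hence $g\equiv0$ near $(0,0)$, and so $G=0$ on the image $\phi(\Omega')\times\phi(\Omega')$, which is an open subset of $\hat X\times\hat X$ around $(a,a)$.

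The remaining step is globalization. $\hat X\times\hat X$ is irreducible because products of irreducible varieties over $\C$ are irreducible. Its smooth locus is connected and dense, and $(a,a)$ is a smooth point. The zero set of $G$ intersected with $\hat X\times\hat X$ is a closed subvariety containing a Euclidean-open piece of the smooth locus, so by the identity theorem on the connected complex manifold of smooth points it contains the whole smooth locus, hence everything.

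The only delicate point is ensuring that the parametrization commutes with conjugation and that irreducibility over $\C$ is available. Both are already established in the paper, so I expect no real obstacle. If one wishes to avoid the product-irreducibility fact, an alternative is to argue algebraically: if $G\notin I(\hat X\times\hat X)=(h(z),h(w))$, the local vanishing already forces membership in that ideal, since $(h(z),h(w))$ is prime.
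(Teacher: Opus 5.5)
Your proposal is correct and follows the paper's proof essentially step by step: you take a conjugation-compatible local parametrization $\phi$ of $\hat X$ at a smooth nonzero real point, use the power-series/Wirtinger argument to get $G(\phi(\xi),\phi(\zeta))\equiv0$, and then use irreducibility of $\hat X\times\hat X$ to globalize. The only difference is cosmetic: you globalize via connectedness of the smooth locus and the identity theorem, while the paper notes that a proper algebraic subset of $\hat X\times\hat X$ has smaller dimension and so cannot contain a Euclidean neighborhood of $(b,b)$ (your closing ``algebraic alternative'' just restates the claim and is not needed).
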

\begin{proof}
Choose a nonzero smooth real point $0\neq b\in \hat{X}(\R)$. Since $h$ has real
coefficients, the holomorphic implicit function theorem gives a local
parametrization $\phi$ of $\hat{X}$ with $\phi(0)=b$ and
$\phi(\overline\xi)=\overline{\phi(\xi)}$. Expand
\[
 G(\phi(\xi),\phi(\zeta))
 =\sum_{\mu,\nu}c_{\mu\nu}\xi^\mu\zeta^\nu.
\]
Setting $\zeta=\overline\xi$ gives zero by hypothesis. Applying
$\partial_\xi^\mu\partial_{\overline\xi}^\nu$ at zero gives
$\mu!\nu!c_{\mu\nu}=0$. Thus $G$ vanishes on a complex open
neighborhood of $(b,b)$ in $\hat{X}\times \hat{X}$.

Since $h$ is irreducible over $\C$, $\hat{X}\times \hat{X}$ is irreducible.
A proper algebraic subset has smaller dimension and cannot contain
this neighborhood. Hence $G=0$ on $\hat{X}\times \hat{X}$.
\end{proof}

Let $A=\R[z]/(h)$, and let $\mm_j$ be a column of real homogeneous
polynomials representing a basis of its degree-$j$ part $A_j$.
In this quotient basis, polarization ensures that each restricted
biform determines a unique coefficient matrix:

\begin{lemma}\label{lem:coefficients}
Every restricted $(j,j)$-biform has a unique expression
$\mm_j(z)^*H\mm_j(z)$ on $\hat{X}$, with $H$ a complex matrix. If the
biform is real-valued, then $H$ is Hermitian. Writing $N=\dim_{\R}A_j$,
the evaluation vectors $\mm_j(z)$, for $z\in \hat{X}$, span $\C^N$
over $\C$.
\end{lemma}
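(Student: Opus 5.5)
Existence comes first. A $(j,j)$-biform is $G(z,\overline z)$ with $G(z,w)=\sum c_{\mu\nu}z^\mu w^\nu$, bihomogeneous of degree $(j,j)$. Each monomial $z^\mu$ of degree $j$ is congruent modulo $(h)$ to a real linear combination of the entries of $\mm_j$, because $\mm_j$ represents a basis of $A_j=\R[z]_j/(h)_j$. Concretely, there is a real matrix $R$ with $z^{(j)}\equiv R\,\mm_j(z)$ modulo $h$, where $z^{(j)}$ is the vector of degree-$j$ monomials. Writing $G(z,w)=\overline{w^{(j)}}^{\,*}\!\cdots$ more simply as $G(z,w)=(w^{(j)})^TC\,z^{(j)}$, we get
\[
G(z,\overline z)=\mm_j(\overline z)^T R^TCR\,\mm_j(z)=\mm_j(z)^*H\mm_j(z)
\]
on $\hat X$, with $H=R^TCR$, since $R$ is real and $\mm_j$ has real coefficients. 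Here $G(z,\overline z)$ is restricted to $\hat X$, and the congruence holds on $\hat X$ in both the $z$ and the $w$ slot.

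Uniqueness comes next. Suppose $\mm_j(z)^*H\mm_j(z)=0$ on $\hat X$. Set $G(z,w)=\mm_j(w)^TH\mm_j(z)$, so that $G(z,\overline z)=0$ on $\hat X$. Lemma~\ref{lem:polarize} then gives $G=0$ on $\hat X\times\hat X$. It remains to see that this forces $H=0$. For fixed $w\in\hat X$, the polynomial $z\mapsto \mm_j(w)^TH\mm_j(z)$ is a complex combination of the entries of $\mm_j$ vanishing on $\hat X$. Since $h$ is irreducible over $\C$, the ideal of $\hat X$ is $(h)$ by the Nullstellensatz. The classes of $\mm_j$ are linearly independent in $A_j\otimes\C=\C[z]_j/(h)_j$, so the coefficient vector $H^T\mm_j(w)$ vanishes for every $w\in\hat X$. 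Applying the same argument to the rows of $H^T$ gives $H=0$.

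The spanning assertion follows the same way. If $\mm_j(z)$, for $z\in\hat X$, lay in a proper subspace, some nonzero $c\in\C^N$ would have $c^T\mm_j=0$ on $\hat X$. That would mean $c^T\mm_j\in(h)$, contradicting the linear independence of the classes over $\C$; independence over $\R$ transfers to $\C$ because $(h)$ is defined over $\R$. This spanning statement is in fact what powers the uniqueness step above.

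For Hermitian symmetry, suppose the biform is real-valued. Then $\mm_j^*H\mm_j=\overline{\mm_j^*H\mm_j}=\mm_j^*H^*\mm_j$ on $\hat X$, and uniqueness gives $H=H^*$.

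The main obstacle is the one genuinely nontrivial point: passing from vanishing on the real form $\{(z,\overline z)\}$ to vanishing on $\hat X\times\hat X$, which Lemma~\ref{lem:polarize} already supplies, together with $I(\hat X)=(h)$. The latter requires $h$ squarefree, and that holds by irreducibility over $\C$ from Section~\ref{sec:reductions}.
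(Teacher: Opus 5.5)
Your proof is correct and follows the paper's argument. Existence comes from reducing each variable group modulo $h$, spanning from the Nullstellensatz with $I(\hat X)=(h)$ by irreducibility over $\C$, uniqueness from Lemma~\ref{lem:polarize} plus spanning in both variables, and Hermitian symmetry from conjugation plus uniqueness; you only make explicit what the paper leaves implicit, namely that $\overline z\in\hat X$ because $h$ is real and that independence of the classes of $\mm_j$ transfers from $\R$ to $\C$.
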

\begin{proof}
Reducing each variable group modulo $h$ gives existence of the
expression. The evaluation vectors span by Hilbert's Nullstellensatz, since $h$ is irreducible over $\C$.

If $\mm_j(z)^*H\mm_j(z)=0$, Lemma~\ref{lem:polarize} gives
$\mm_j(w)^TH\mm_j(z)=0$ for all $z,w\in \hat{X}$. Spanning in both variables
implies $H=0$, proving uniqueness. For a real-valued biform, taking
the complex conjugate replaces $H$ by $H^*$, so uniqueness gives
$H=H^*$.
\end{proof}

This uniqueness distinguishes the present construction from the real
sums-of-squares relaxation in \cite{KPV}. There one seeks a positive semidefinite Gram matrix
among generally many choices, leading to a projection of a spectrahedron.
Here a restricted biform determines its Hermitian coefficient matrix
uniquely, so linear dependence on parameters passes directly to the
matrix, without introducing extra Gram-matrix variables.

The annihilation identity for the field now gives a prescribed
evaluation vector in the kernel of each boundary matrix.

\begin{lemma}\label{lem:kernel}
Write $\alpha_i(z,\overline z)=\mm_k(z)^*A_i\mm_k(z)$ and put
$S_a=\sum_i a_iA_i$. Then $S_a$ is Hermitian and
$S_a\mm_k(a)=0$ for every $a\in K$.
\end{lemma}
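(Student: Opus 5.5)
The plan is to use Lemma~\ref{lem:coefficients} for the Hermitian property and Lemma~\ref{lem:polarize} to turn the annihilation identity of Lemma~\ref{lem:alpha} into a statement about the matrices $A_i$.

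\emph{Hermitian property.} Each $\alpha_i$ is a real-valued $(k,k)$-biform. Lemma~\ref{lem:coefficients} therefore makes each $A_i$ Hermitian. Since $a\in K$ is real, $S_a=\sum_i a_iA_i$ is a real combination of Hermitian matrices and is Hermitian as well.

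\emph{Kernel.} Lemma~\ref{lem:alpha} gives $\sum_i z_i\alpha_i(z,\overline z)=0$ for $z\in\hat X$. Put
\[
 G(z,w)=\sum_i z_i\,\mm_k(w)^T A_i\,\mm_k(z).
\]
This is a polynomial in independent variables $z,w$, and $G(z,\overline z)=\sum_i z_i\alpha_i(z,\overline z)$, because $\mm_k$ has real coefficients, so $\mm_k(\overline z)=\overline{\mm_k(z)}$. Hence $G(z,\overline z)=0$ on $\hat X$, and Lemma~\ref{lem:polarize} gives $G=0$ on $\hat X\times\hat X$.

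Now specialize $z=a$, which lies in $\hat X$ because $a\in K\subseteq X(\R)$. For every $w\in\hat X$ this gives
\[
 \mm_k(w)^T S_a\,\mm_k(a)=0 .
\]
By Lemma~\ref{lem:coefficients}, the vectors $\mm_k(w)$ for $w\in\hat X$ span $\C^N$. Therefore $S_a\mm_k(a)=0$.

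The point needing the most care is the order of the variables: the linear factor $z_i$ must be attached to the same variable group as $\mm_k(z)$, not the conjugated one. With the convention $\alpha_i=\mm_k(z)^*A_i\mm_k(z)$, the unconjugated group is the right-hand factor $\mm_k(z)$. Setting $z=a$ then places $\mm_k(a)$ on the right of $S_a$, which yields $S_a\mm_k(a)=0$ directly. If the other group had been chosen instead, the argument would give $\mm_k(a)^TS_a=0$. Since $\mm_k(a)$ is real and $S_a$ is Hermitian, this is equivalent to $S_a^T\mm_k(a)=\overline{S_a}\,\mm_k(a)=0$, and conjugating gives the same conclusion $S_a\mm_k(a)=0$. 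Either way the statement follows.
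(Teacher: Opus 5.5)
Your proposal is correct and follows the paper's proof essentially line for line. It uses the same polynomial $G(z,w)=\mm_k(w)^T\bigl(\sum_i z_iA_i\bigr)\mm_k(z)$, polarization via Lemma~\ref{lem:polarize}, the specialization $z=a$, and the spanning assertion of Lemma~\ref{lem:coefficients}, and your extra remarks (that $S_a$ is Hermitian because $a$ is real, and which variable group carries the linear factor) are accurate details the paper leaves implicit.
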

\begin{proof}
The $A_i$ are Hermitian by Lemma~\ref{lem:coefficients}. The polynomial
\[
 G(z,w)=\mm_k(w)^T\Bigl(\sum_i z_iA_i\Bigr)\mm_k(z)
\]
vanishes at $w=\overline z$ by $z\cdot\alpha(z,\overline z)=0$.
By Lemma~\ref{lem:polarize}, $G$ vanishes on $\hat{X}\times \hat{X}$.
Setting $z=a$ and using the spanning assertion of
Lemma~\ref{lem:coefficients} gives $S_a\mm_k(a)=0$.
\end{proof}

\subsection{Turning positivity into sums of squares}

We factor out the prescribed zero at $a$ to eventually obtain strict positivity.

\begin{lemma}\label{lem:pointideal}
For $a\in K$ put $u_i=z_i-a_i z_0$, $1\leqslant i\leqslant n$.
There are polynomial matrices and vectors with
\[
 \mm_k=z_0^k\mm_k(a)+R_a u,\qquad h=r_a^Tu.
\]
Their coefficients depend polynomially on $a$, and their degrees in
$z$ are $k-1$ and $d-1$, respectively. With $H_a=R_a^*S_aR_a$, we have
\[
 F_a(z,\overline z)=u^*H_au\quad\hbox{on }\hat{X}.
\]
\end{lemma}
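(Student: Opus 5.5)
The plan is to obtain both expansions by Taylor expansion around the line $\C a$ in the variables $u_i=z_i-a_iz_0$, and then to use the kernel identity of Lemma~\ref{lem:kernel} to kill the leading terms.

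\emph{Expansion of $\mm_k$.} Substitute $z_i=u_i+a_iz_0$ for $1\leqslant i\leqslant n$ into each entry of $\mm_k$. Each entry is homogeneous of degree $k$, so the terms free of $u$ give $z_0^k\,m(1,a_1,\ldots,a_n)=z_0^k\mm_k(a)$, using the normalization $a_0=1$. Every remaining monomial contains some $u_i$. Assign each such monomial to one of its $u_i$, say the one of smallest index, and divide it out. This yields the entries of a matrix $R_a$ that is homogeneous of degree $k-1$ in $(z_0,u)$, hence in $z$. Its coefficients are polynomial in $a$, because the substitution is polynomial in $a$.

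\emph{Expansion of $h$.} Expand $h$ in the same way. The $u$-free part is $z_0^dh(a)=0$, since $a\in K\subseteq\hat{X}(\R)$. The same division procedure then gives $h=r_a^Tu$ with $r_a$ of degree $d-1$ and polynomial in $a$. Both expansions are genuine polynomial identities in $z$.

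\emph{The identity for $F_a$.} By definition $F_a=a\cdot\alpha=\mm_k(z)^*S_a\mm_k(z)$ on $\hat{X}$. Insert $\mm_k=z_0^k\mm_k(a)+R_au$ and expand into four terms.
\begin{itemize}
\item The term $|z_0|^{2k}\mm_k(a)^*S_a\mm_k(a)$ vanishes by Lemma~\ref{lem:kernel}.
\item The cross term $\overline{z_0}^k\mm_k(a)^*S_aR_au$ vanishes because $S_a$ is Hermitian, so $\mm_k(a)^*S_a=(S_a\mm_k(a))^*=0$; here $\mm_k(a)$ is real.
\item The other cross term $z_0^k(R_au)^*S_a\mm_k(a)$ vanishes directly by Lemma~\ref{lem:kernel}.
\end{itemize}
What remains is $u^*R_a^*S_aR_au=u^*H_au$, which gives the claimed identity on $\hat{X}$.

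The only points needing care are that $a$ has real entries, so conjugation does not affect $\mm_k(a)$, and that the vanishing of $h(a)$ is used for $h=r_a^Tu$. No real obstacle is expected. The statement is essentially bookkeeping, and the main content is the kernel identity already established.
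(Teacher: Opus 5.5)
Your proposal is correct and is essentially the paper's argument. The paper also substitutes $z_i=a_iz_0+u_i$, collects the $u$-free part $z_0^mf(a)$ and uses $h(a)=0$. It then kills the constant term and both cross terms of $\mm_k^*S_a\mm_k$ using $S_a\mm_k(a)=0$ and Hermitian symmetry, leaving $u^*R_a^*S_aR_au$. Your remark that $\mm_k(a)$ is real is harmless but not needed, since $\mm_k(a)^*S_a=(S_a\mm_k(a))^*$ holds for Hermitian $S_a$ anyway.
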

\begin{proof}
Substitute $z_i=a_i z_0+u_i$ in a homogeneous polynomial $f$ of degree
$m$. The term independent of $u$ is $z_0^mf(a)$; every other term
has a factor $u_i$. Collecting these terms gives
$f=z_0^mf(a)+\sum_i R_i u_i$, with the stated degrees and polynomial
dependence on $a$. Apply this to $\mm_k$ and to $h$, using $h(a)=0$.
In $\mm_k^*S_a\mm_k$, the constant term and both cross terms vanish
by Lemma~\ref{lem:kernel} and Hermitian symmetry. The remaining term
is $u^*R_a^*S_aR_a u$.
\end{proof}

The next matrix supplies a positive semidefinite correction that
leaves $F_a$ unchanged on $X$ and controls all remaining directions.

\begin{lemma}\label{lem:correction}
Define
\[
 U_a=p^{k-2}(\|u\|^2I-uu^*)+p^{k-d}\overline r_a r_a^T.
\]
For $z\ne0$ this matrix is positive semidefinite and
$u^*U_au=p^{k-d}|h(z)|^2$. Its kernel is zero off $\hat{X}$, is $\C u(z)$
on $\hat{X}\setminus\C a$, and for $z\in\C a\setminus\{0\}$ is the
complex tangent space to $X$ at $[a]$ in the chart $z_0=1$,
identified with a subspace of $\C^n$.
Moreover, $H_a(z)$ is positive on every nonzero vector of this kernel.
\end{lemma}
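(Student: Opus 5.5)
Positive semidefiniteness comes first. For $z\neq0$ we have $p>0$. The matrix $\|u\|^2I-uu^*$ is positive semidefinite by Cauchy--Schwarz: $\xi^*(\|u\|^2I-uu^*)\xi=\|u\|^2\|\xi\|^2-|u^*\xi|^2\geqslant0$. The matrix $\overline r_ar_a^T$ is the rank-one positive semidefinite matrix $\overline r_a(\overline r_a)^*$. The identity for $u^*U_au$ is direct: the first summand contributes $\|u\|^4-\|u\|^4=0$, and the second gives $|r_a^Tu|^2=|h(z)|^2$ by Lemma~\ref{lem:pointideal}.

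For the kernel, $\xi$ lies in the kernel iff both nonnegative terms vanish. If $u\neq0$, the first term vanishes iff $\xi\in\C u$. Then the second term vanishes iff $r_a^T\xi=0$, which for $\xi=cu$ means $c\,h(z)=0$. So off $\hat X$ the kernel is zero (note $u\ne0$ there, since $u=0$ means $z\in\C a\subseteq\hat X$), and on $\hat X\setminus\C a$ it is $\C u(z)$. For $z=z_0a$ with $z_0\neq0$, we have $u=0$ and the first term vanishes. The kernel is then $\{\xi: r_a(z)^T\xi=0\}$. Differentiating $h=r_a^Tu$ at $z=a$ in the $u$-directions (chart $z_0=1$, coordinates $u_i=z_i-a_i$) gives $\partial h/\partial z_i(a)=r_{a,i}(a)$, since $u(a)=0$. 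By homogeneity $r_a(z_0a)$ is a nonzero multiple of $r_a(a)$. So the kernel is $\{\xi\in\C^n:\sum_{i\geqslant1}\partial_ih(a)\xi_i=0\}$, which is the tangent space of the affine chart at the smooth point $[a]$. It is a hyperplane because $\nabla h(a)$ restricted to $i\geqslant1$ is nonzero; otherwise Euler's identity would force $\partial_0h(a)=0$ too, contradicting smoothness.

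Positivity of $H_a$ on the kernel splits into the same two cases. On $\hat X\setminus\C a$, with $\xi=cu$ and $c\neq0$, Lemma~\ref{lem:pointideal} gives $\xi^*H_a\xi=|c|^2F_a(z,\overline z)>0$ by Lemma~\ref{lem:alpha}. At $z=z_0a$ the key step is to identify $H_a(a)$ restricted to the tangent space with the Hermitian quadratic term of $F_a$ at $[a]$. Parametrize $X$ near $[a]$ by $\phi(\xi)=a+\xi'+O(\|\xi\|^2)$ in the chart $z_0=1$, where $\xi'$ ranges over the tangent space. Then $u(\phi(\xi))=\xi'+O(\|\xi\|^2)$. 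Since $F_a=u^*H_a(z)u$ on $\hat X$ and $H_a$ is continuous, we get
\[
F_a(\phi(\xi),\overline{\phi(\xi)})=\xi'^*H_a(a)\xi'+O(\|\xi\|^3).
\]
Lemma~\ref{lem:alpha} says this quadratic term is positive definite on the tangent space, so $H_a(a)$ is positive on nonzero tangent vectors. Scaling $z=z_0a$ multiplies $H_a$ by the positive factor $|z_0|^{2(k-1)}$, so positivity holds for all such $z$. The case $z_0=0$ cannot occur, because $a_0=1$.

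I expect the main obstacle to be this last identification. One must check that the chart coordinates used in Lemma~\ref{lem:alpha} (the projective distance and the parametrization of Lemma~\ref{lem:contact}) match the $u$-coordinates to first order, so that the quadratic terms correspond under a linear isomorphism. Positive definiteness is invariant under such isomorphisms, so this compatibility is all that is needed.
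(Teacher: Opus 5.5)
Your proposal is correct and follows essentially the same route as the paper. Both use Cauchy--Schwarz for semidefiniteness and the scalar identity, then the case split $u\ne0$ versus $z\in\C a$ with $r_a(a)=(\partial_ih(a))_{i\geqslant1}\ne0$ via Euler's identity, positivity from $F_a>0$ off $[a]$, the identification of $H_a(a)$ on the tangent space with the quadratic term of $F_a$ through a local parametrization, and finally homogeneity. The chart-compatibility issue you flag as the main obstacle is harmless: a holomorphic change of coordinates acts on the leading Hermitian form by a complex-linear congruence, which preserves positive definiteness, and the paper uses this implicitly as well.
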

\begin{proof}
For $w\in\C^n$,
\[
 w^*U_aw=p^{k-2}\bigl(\|u\|^2\|w\|^2-|u^*w|^2\bigr)
       +p^{k-d}|r_a^Tw|^2\geqslant0
\]
by Cauchy--Schwarz. Taking $w=u$ gives the scalar identity.
If $u\ne0$, equality in the first summand holds exactly when
$w\in\C u$. Such a vector also lies in the second kernel precisely
when $r_a^Tu=h(z)=0$, unless it is zero.

When $u=0$ and $z\ne0$, $[z]=[a]$. At the representative $a$, differentiation
of $h=r_a^Tu$ gives $r_a(a)=(\partial_i h(a))_{i=1}^n$.
This vector is nonzero: otherwise Euler's identity and $a_0=1$ would
also give $\partial_0h(a)=0$, contradicting smoothness.
Thus $\ker U_a(a)=\{w\mid r_a(a)^Tw=0\}$, the claimed tangent space.

For $[z]\ne[a]$ in $X$, positivity on the kernel follows from
$u^*H_au=F_a(z,\overline z)>0$. At $a$, choose a local holomorphic
parametrization $\phi$ of $X$ in the chart $z_0=1$, with $\phi(0)=a$.
Write
\[
 u(\phi(\xi))=L\xi+O(\|\xi\|^2),
\]
where $L\colon\C^{n-1}\to\C^n$ is injective with image the tangent space.
By Lemma~\ref{lem:pointideal},
\[
 F_a(\phi(\xi),\overline{\phi(\xi)})
 =\xi^*L^*H_a(a)L\xi+O(\|\xi\|^3).
\]
Lemma~\ref{lem:alpha} gives $L^*H_a(a)L>0$, so $H_a(a)$ is positive
on every nonzero tangent vector. Finally, for $c\in\C\setminus\{0\}$,
homogeneity gives
\[
 U_a(ca)=|c|^{2k-2}U_a(a),\qquad
 H_a(ca)=|c|^{2k-2}H_a(a).
\]
This proves both the kernel description and positivity for all
nonzero representatives of $[a]$.
\end{proof}

The following compactness argument makes the correction large enough
to obtain positive definiteness throughout the family.

\begin{lemma}\label{lem:compactcorrection}
Let $H_s,U_s$ be continuous Hermitian matrix families on a compact
parameter space, with $U_s\geqslant0$. If $H_s$ is positive on every
nonzero vector of $\ker U_s$, then $H_s+\lambda U_s>0$ for one
$\lambda>0$ and all $s$.
\end{lemma}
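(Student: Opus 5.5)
The plan is a two-step compactness argument: first show positivity pointwise in $s$ for some $\lambda$, then use that positive definiteness is an open condition, monotone in $\lambda$ because $U_s\geqslant0$.

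\emph{Pointwise step.} Fix $s$ and write $V=\ker U_s$. On the orthogonal complement $V^\perp$, $U_s$ is bounded below by its smallest positive eigenvalue $\mu_s>0$. Since $H_s$ is Hermitian and positive on nonzero vectors of $V$, compactness of the unit sphere in $V$ gives $H_s\geqslant m_s>0$ there. For a unit vector $w=v+v'$ with $v\in V$ and $v'\in V^\perp$ we estimate
\[
 w^*(H_s+\lambda U_s)w\geqslant m_s\|v\|^2-2\|H_s\|\,\|v\|\,\|v'\|-\|H_s\|\,\|v'\|^2+\lambda\mu_s\|v'\|^2 .
\]
The first three terms form a quadratic form in $(\|v\|,\|v'\|)$. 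Absorbing the cross term by the inequality $2xy\leqslant\epsilon x^2+\epsilon^{-1}y^2$, with $\epsilon=m_s/(2\|H_s\|)$, shows that the whole expression is positive once $\lambda\mu_s$ exceeds an explicit constant. A cleaner alternative is to argue by contradiction: if $H_s+\lambda U_s$ failed to be positive definite for all $\lambda$, take unit vectors $w_\lambda$ with $w_\lambda^*(H_s+\lambda U_s)w_\lambda\leqslant0$. Then $w_\lambda^*U_sw_\lambda\leqslant\|H_s\|/\lambda\to0$, so a limit point $w$ lies in $\ker U_s$ and satisfies $w^*H_sw\leqslant0$, which is a contradiction.

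\emph{Uniform step.} For each $s_0$ choose $\lambda_{s_0}$ with $H_{s_0}+\lambda_{s_0}U_{s_0}>0$. By continuity of the families and openness of the positive definite cone, the same $\lambda_{s_0}$ works on a neighborhood of $s_0$. Because $U_s\geqslant0$, the matrix $H_s+\lambda U_s$ remains positive definite when $\lambda$ increases. So on a finite subcover we may take $\lambda$ to be the maximum of the finitely many $\lambda_{s_0}$.

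Alternatively, the whole lemma can be done by a single contradiction argument. Take $s_m$, $\lambda_m\to\infty$ and unit vectors $w_m$ with $w_m^*(H_{s_m}+\lambda_mU_{s_m})w_m\leqslant0$. Pass to a convergent subsequence $(s_m,w_m)\to(s,w)$. The bound $w_m^*U_{s_m}w_m\leqslant\sup\|H\|/\lambda_m$ gives $U_sw=0$, and the limit gives $w^*H_sw\leqslant0$, contradicting the hypothesis.

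I expect no serious obstacle. The only point needing care is that the kernel of $U_s$ may jump in dimension as $s$ varies, which rules out a uniform eigenvalue gap; the sequential compactness argument avoids this issue entirely, so that is the version I would write.
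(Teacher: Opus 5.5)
Your proposal is correct, and the single contradiction argument you say you would write is essentially the paper's proof: take $\lambda=j$, extract a convergent subsequence of $(s_j,w_j)$, deduce $w^*U_sw=0$ and hence $w\in\ker U_s$, and obtain $w^*H_sw\leqslant0$, which contradicts the hypothesis. Your alternative pointwise-plus-finite-subcover route, using openness of the positive definite cone and monotonicity in $\lambda$ from $U_s\geqslant0$, is also valid, and it does not need sequential compactness of the parameter space.
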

\begin{proof}
Otherwise choose $s_j$ and unit vectors $w_j$ such that
$w_j^*(H_{s_j}+jU_{s_j})w_j\leqslant0$. Pass to a convergent subsequence.
The $H_s$ are bounded, so $w_j^*U_{s_j}w_j\to0$. At the limit,
$w^*U_sw=0$, hence $w\in\ker U_s$.
But also $w_j^*H_{s_j}w_j\leqslant0$, so $w^*H_sw\leqslant0$,
contradicting the hypothesis for the unit vector $w$.
\end{proof}

Apply this lemma with $s=(a,z)\in K\times\{p(z)=1\}$. We obtain
one $\lambda>0$ such that
\[
 D_a=H_a+\lambda U_a>0\qquad(a\in K,\ z\ne0).
\]
Here homogeneity extends positivity from the unit sphere. The entries
of $D_a$ have bidegree $(k-1,k-1)$, and the coefficients depend
continuously on $a$. The scalar identity on $X$ is still
$u^*D_au=F_a$.

We use the following factorization theorem to turn positive definite
polynomial matrices into Hermitian sums of squares after multiplication by $p^N$.

\begin{theorem}[{\cite[Theorem~1]{DAngelo}}]\label{input:stabilization}
Let $D(z,\overline z)$ be an $m\times m$ Hermitian matrix with
$(s,s)$-biform entries, positive definite for all $z\ne0$. For some
integer $N\geqslant0$ there is a polynomial matrix $B(z)$ with entries
in $\C[z_0,\ldots,z_n]$, each homogeneous of degree $s+N$, such that
\[
 p(z)^ND(z,\overline z)=B(z)^*B(z),
\]
and the rows of $B$ span all $m$-tuples of homogeneous polynomials
of degree $s+N$.
\end{theorem}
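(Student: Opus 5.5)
The statement is cited from \cite{DAngelo} and is not proved in the paper. If I had to prove it, I would follow the Catlin--D'Angelo and Quillen strategy and reduce it to positivity of a Toeplitz-type operator. Identify an $m\times m$ matrix of $(t,t)$-biforms with a Hermitian form on $V_t\otimes\C^m$, where $V_t$ is the space of homogeneous polynomials of degree $t$ in $z_0,\ldots,z_n$. Concretely, write $p^ND=\mm(z)^*G_N\mm(z)$, where $\mm(z)$ is the vector of monomials of degree $t=s+N$ tensored with the standard basis of $\C^m$. Since monomials are linearly independent over all of $\C^{n+1}$, this Gram matrix $G_N$ is unique by the same polarization argument as Lemma~\ref{lem:polarize}, now on the full space. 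The claim is then equivalent to $G_N>0$ for some $N$. Indeed, writing $G_N=C^*C$ with $C$ invertible and setting $B(z)=C\,\mm(z)$ (the vector $C\mm(z)\in\C^{\dim V_t\cdot m}$ regrouped as a $\dim V_t\times m$ matrix) gives $p^ND=B^*B$. Invertibility of $C$ gives that the rows of $B$ span all of $V_t^m$.

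To show $G_N>0$, I would equip $V_t$ with the $L^2$ inner product on the unit sphere. In that inner product the monomials are orthogonal, and their norms are explicit via $\int|z^\mu|^2\,d\sigma=c_{n}\,\mu!/(|\mu|+n)!$. Let $T_N$ be the Toeplitz operator on $V_{t}\otimes\C^m$ with symbol $D$, that is, $\langle T_Nf,g\rangle=\int_{S}g^*Df\,d\sigma$. Since $D\geqslant c\,I$ on the sphere by compactness and homogeneity, we get $T_N\geqslant c\,I$ for every $N$.

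The key step is to compare the quadratic form of $G_N$, suitably weighted by the diagonal matrix of monomial norms, with that of $T_N$. Expanding $D=\sum_{\mu,\nu}D_{\mu\nu}z^\mu\overline z^\nu$ and $p^N$ by the multinomial theorem, each coefficient of $G_N$ becomes an explicit combinatorial expression in $D_{\mu\nu}$. The corresponding matrix entry of $T_N$ involves the same $D_{\mu\nu}$, multiplied by ratios of factorials coming from the sphere integrals. Following Quillen's and Catlin--D'Angelo's computation, I would show that after conjugation by the diagonal weights the two agree up to an error operator $E_N$ with $\|E_N\|=O(1/N)$, uniformly in the degree. The ratios behave like $(t+\text{const})^{-\deg}$ times polynomials in the multi-indices. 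Equivalently, one can use the Fischer inner product, in which multiplication by $z_i$ and differentiation $\partial_i$ are adjoint. Then $G_N$ corresponds to the operator $D(z,\partial)$ compressed to $V_t$, and the difference from the Toeplitz operator is a sum of lower-order terms, each gaining a factor $1/t$.

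Given this estimate, $G_N\geqslant(c-O(1/N))$ times a positive diagonal matrix, so $G_N>0$ for large $N$. The main obstacle is the uniform error estimate $\|E_N\|\to0$. It must hold in operator norm on spaces whose dimension grows with $N$, and it is where the strict positivity of $D$ for all $z\neq0$ is essential. I would first try the Fischer-space formulation, since there the commutators $[\partial_i,z_j]=\delta_{ij}$ make the bookkeeping of the lower-order terms explicit.
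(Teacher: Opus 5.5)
The paper gives no proof of this statement; it imports it from \cite{DAngelo}. Your plan is Quillen's Fock-space argument, a different and quantitative route compared with the Bergman-space compactness argument of Catlin and D'Angelo. Your reduction to $G_N>0$ is correct. The analytic core, however, is only announced, and as formulated it fails in exactly the matrix-valued case the paper needs ($m=n\geqslant2$).

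\textbf{The gap: a partial transpose.} Write $D=\sum_{\alpha,\beta}\overline z^{\alpha}c_{\alpha\beta}z^{\beta}$ with $c_{\alpha\beta}\in\C^{m\times m}$.
\begin{itemize}
\item The block $c_{\alpha\beta}$ contributes to the $(\gamma,\delta)$ block of $G_N$ only when $\gamma-\delta=\alpha-\beta$.
\item It contributes to the $(\gamma,\delta)$ block $\int_S\overline z^{\gamma}z^{\delta}D\,d\sigma$ of your $T_N$ only when $\gamma-\delta=\beta-\alpha$.
\end{itemize}
Equivalently, $D(z,\partial)=\sum c_{\alpha\beta}z^{\beta}\partial^{\alpha}$ on $V_t\otimes\C^m$ represents, up to diagonal weights and a factor $N!$, the \emph{partial transpose} of $G_N$ in the monomial index, not $G_N$ itself. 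For $m=1$ this is an ordinary transpose and harmless. For $m\geqslant2$, partial transposition does not preserve positivity: the projector onto $e_{\gamma_1}\otimes e_1+e_{\gamma_2}\otimes e_2$ becomes indefinite. So your claimed $\|E_N\|=O(1/N)$ is false as stated, and the argument would prove positivity of the wrong matrix.

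\textbf{The repair.} Use the symbol $D^T=\overline D$ instead; it is still positive definite for $z\neq0$. With that change, the estimate you defer is short.
\begin{itemize}
\item Take the Fischer product $\langle f,g\rangle=\pi^{-n-1}\int g^*f\,e^{-|z|^2}$. Then $\partial_i$ is adjoint to multiplication by $z_i$, and on $V_u\otimes\C^m$ one has $\|z_i\|\leqslant\sqrt{u+1}$ and $\|\partial_i\|\leqslant\sqrt u$.
\item Put $W=\sum c_{\alpha\beta}^Tz^{\beta}\partial^{\alpha}$, $T=\sum c_{\alpha\beta}^T\partial^{\alpha}z^{\beta}$, and $t=s+N$.
\item Let $E$ be the Euler operator. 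Since $\sum_{|\kappa|=N}\binom{N}{\kappa}z^{\kappa}\partial^{\kappa}=E(E-1)\cdots(E-N+1)$, the operator obtained from $p^ND^T$ by the same rule ($\overline z^{\alpha}\mapsto\partial^{\alpha}$, placed on the right) equals $N!\,W$ on $V_t\otimes\C^m$.
\item That operator's quadratic form is $w^*G_N^Tw$ with $w=(\partial^{\gamma}g)_{|\gamma|=t}$. Hence $G_N>0$ if and only if $W>0$ on $V_t\otimes\C^m$.
\item By homogeneity and compactness, $D^T\geqslant c_0p^sI$. This gives $\langle Tg,g\rangle\geqslant c_0(t+n+1)\cdots(t+n+s)\|g\|^2$.
\item The reordering formula $\partial^{\alpha}z^{\beta}=\sum_{\kappa}\kappa!\binom{\alpha}{\kappa}\binom{\beta}{\kappa}z^{\beta-\kappa}\partial^{\alpha-\kappa}$ shows that $T-W$ is a fixed finite combination of operators $z^{\beta-\kappa}\partial^{\alpha-\kappa}$ with $\kappa\neq0$. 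Each has norm at most $t^{s-|\kappa|}$ on $V_t\otimes\C^m$.
\item Therefore $W\geqslant(c_0t^s-Ct^{s-1})I>0$ for large $t$, with $C$ depending only on the coefficients of $D$.
\end{itemize}
Contrary to your remark, strict positivity enters only through the lower bound for $T$. The error bound is purely combinatorial and uniform in the degree.

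\textbf{A minor slip.} With $G_N=C^*C$, the factor is $B(z)=C\,(\mm_t(z)\otimes I_m)$, which has $m\dim V_t$ rows. A $\dim V_t\times m$ matrix cannot have rows spanning $V_t^m$ when $m>1$.
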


Compactness and monotonicity allow the same stabilization exponent
to be used for every boundary point.

\begin{lemma}\label{lem:uniformN}
There is one $N$ such that every $D_a$ admits a factorization
$p^ND_a=B_a^*B_a$ whose rows span all polynomial $n$-tuples
homogeneous of degree $k+N-1$.
\end{lemma}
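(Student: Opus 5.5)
The plan is a compactness argument over $K$, resting on two facts about the property
\[
 \mathcal P_N(D):\ \text{there is } B \text{ with } p^ND=B^*B \text{ whose rows span all } n\text{-tuples of degree } k+N-1.
\]
First, $\mathcal P_N$ is open in $D$ for fixed $N$. Second, $\mathcal P_N$ is preserved when $N$ is increased. Theorem~\ref{input:stabilization} applied to each $D_a$ gives some $N_a$ with $\mathcal P_{N_a}(D_a)$. Openness then gives a neighborhood of $a$ in $K$ on which $\mathcal P_{N_a}$ still holds, using continuity of $a\mapsto D_a$. A finite subcover of the compact set $K$ and the maximum of the finitely many exponents, together with monotonicity, give the uniform $N$.

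For openness I would reformulate $\mathcal P_N$ as a Gram-matrix condition. Let $\mm$ be the column of all monomials of degree $N$ and $V$ the space of $n$-tuples of homogeneous polynomials of degree $k+N-1$. Write
\[
 p^ND(z,\overline z)=\Phi(z)^*G(D)\Phi(z),
\]
where $\Phi(z)$ is the evaluation matrix whose columns are the basis elements of $V$ (monomial times unit vector). Biforms in $z,\overline z$ on all of $\C^{n+1}$ have unique coefficient matrices, so $G(D)$ is a uniquely determined Hermitian matrix depending linearly, hence continuously, on $D$. A factorization $p^ND=B^*B$ with $B=C\Phi$ satisfies $G(D)=C^*C$. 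The rows of $B$ span $V$ exactly when $C$ has full column rank, that is, when $G(D)>0$. Conversely, if $G(D)>0$ then $C=G(D)^{1/2}$ gives such a factorization. So $\mathcal P_N(D)$ is equivalent to $G_N(D)>0$, which is an open condition, and continuity in $a$ gives a neighborhood in $K$.

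For monotonicity, suppose $p^ND=B^*B$ with the rows of $B$ spanning $V_N$. Then
\[
 p^{N+1}D=\sum_{i=0}^n (z_iB)^*(z_iB),
\]
so the stacked matrix $B'=(z_0B;\dots;z_nB)$ factors $p^{N+1}D$. Its rows span $z_0V_N+\dots+z_nV_N=V_{N+1}$, because every monomial of positive degree is divisible by some $z_i$.

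The only delicate point I expect is the uniqueness of the Gram matrix $G$, which is needed to make positivity a genuinely linear open condition. This holds because here we work on the full space $\C^{n+1}$ rather than on $\hat X$. The argument is the polarization principle of Lemma~\ref{lem:polarize} in the trivial case $\hat X=\C^{n+1}$, combined with the fact that monomials are linearly independent. Spanning is measured over $\C$. The continuity of $a\mapsto D_a$, noted before the lemma, is exactly what openness needs.
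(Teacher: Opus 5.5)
Your proposal is correct and follows the paper's proof essentially step for step. You show openness through positive definiteness of the unique coefficient matrix of $p^ND$, monotonicity through $pB^*B=\sum_i (z_iB)^*(z_iB)$, and conclude with a finite subcover of $K$ and the maximal exponent; the only difference is that you verify directly the equivalence between full-spanning factorizations and $G_N(D)>0$, which the paper cites from \cite[Section~II]{DAngelo} (minor slips: the monomials should have degree $k+N-1$ rather than $N$, and the basis elements of $V$ are the rows, not the columns, of $\Phi(z)$ when $B=C\Phi$).
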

\begin{proof}
For a fixed $N$, the full-spanning factorization condition is equivalent
to positive definiteness of the coefficient matrix of $p^ND$, indexed
by degree-$(k+N-1)$ monomials in $\C[z_0,\ldots,z_n]$ and vector
coordinates \cite[Section~II]{DAngelo}.
This coefficient matrix depends linearly on $D$, so the condition
is open in the finite-dimensional coefficient space.

These conditions increase with $N$. Indeed,
$pB^*B=\sum_i(z_iB)^*(z_iB)$, and the rows $z_iB$ span the next degree,
since every monomial of positive degree is divisible by some $z_i$.

The family $\{D_a\mid a\in K\}$ is compact because its coefficients
depend continuously on $a$. The above external input places each $D_a$ in one of the open sets above. Choose a finite
subcover and take the largest of its exponents. By monotonicity,
this exponent works for every $a\in K$.
\end{proof}

Powers of the squared norm are also used to obtain sums-of-squares
inner approximations of the cone in
\cite{KPV}.
Here the multiplier is applied to a different, Hermitian construction
after resolving its prescribed zeros. The preceding lemma gives one
exponent for the entire boundary family, not only interior points.

The spanning property of the factorization makes each stabilized
boundary matrix positive semidefinite with exactly the prescribed kernel.

\begin{lemma}\label{lem:exactkernel}
Put $q=k+N$. The coefficient matrix of $p^NF_a$ in the basis $\mm_q$
is positive semidefinite, with kernel exactly $\C\mm_q(a)$.
\end{lemma}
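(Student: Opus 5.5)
The plan is to write $p^NF_a$ on $\hat X$ as a Hermitian square $\|v_a\|^2$, where $v_a$ is a polynomial vector whose entries are degree-$q$ forms. This will give positive semidefiniteness. The kernel is then read off from the spanning property in Lemma~\ref{lem:uniformN} together with Lemma~\ref{lem:coefficients}.

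\emph{Positive semidefiniteness.} By Lemma~\ref{lem:uniformN} and the identity $u^*D_au=F_a$ on $\hat X$, we have
\[
 p^NF_a=u^*p^ND_au=\|B_au\|^2 \quad\text{on } \hat X.
\]
Each entry of $B_au$ is a homogeneous polynomial of degree $k+N-1+1=q$. Write its residue modulo $h$ in the basis $\mm_q$, so that $B_au\equiv G_a\mm_q$ with $G_a$ a complex matrix. Then on $\hat X$ we get $p^NF_a=\mm_q^*G_a^*G_a\mm_q$. By the uniqueness in Lemma~\ref{lem:coefficients}, the coefficient matrix of $p^NF_a$ is therefore $M_a=G_a^*G_a\geqslant0$, and $\ker M_a=\ker G_a$.

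\emph{The kernel contains $\mm_q(a)$.} This is immediate, since $u(a)=0$ gives $G_a\mm_q(a)=B_a(a)u(a)=0$. (It also follows from Lemma~\ref{lem:kernel} applied to the degree-$q$ field $p^N\alpha$.)

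\emph{The kernel is no larger.} Here the spanning property is used. The rows of $B_a$ span all $n$-tuples of degree-$(k+N-1)$ forms. Hence the entries of $B_au$, namely $\sum_i (B_a)_{li}u_i$, span over $\C$ the whole degree-$q$ part of the ideal $I=(u_1,\ldots,u_n)$, which is the ideal of the line $\C a$. So the row space of $G_a$ (as linear functionals on $A_q\otimes\C$) is the image of $I_q$ in $A_q\otimes\C$, taken modulo $h$. We then compute its codimension. Because $h(a)=0$, we have $h\in I$, so
\[
 (A/I)_q=\C[z]_q/I_q\cong\C\cdot z_0^q,
\]
which is one-dimensional, and the image of $I_q$ has codimension exactly one in $A_q\otimes\C$. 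Consequently $\operatorname{rank}G_a=N_q-1$ and $\ker G_a$ is one-dimensional. Combined with the previous step, this gives $\ker M_a=\C\mm_q(a)$; here $\mm_q(a)\ne0$ because the basis spans $A_q$ and $z_0^q(a)=1$.

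The main obstacle is the bookkeeping in the last step. Concretely, one must check that ``row space of $G_a$'' equals the image of $I_q$ in the quotient. This means identifying linear functionals on coefficient vectors $w\in\C^{N_q}$ with the forms $w^T\mm_q$ modulo $h$. Under that identification, $G_aw=0$ should mean that $w$ pairs to zero with every element of $I_q$ under the induced duality. The cleanest way to carry this out is to test against point evaluations. A vector $w$ lies in $\ker G_a$ if and only if the functional $f\mapsto$ (coefficient pairing) kills $I_q/(h)_q$. The annihilator of a codimension-one subspace is spanned by evaluation at $a$, which is exactly $\mm_q(a)$ up to conjugation; since $a$ is real, no conjugation issue arises.
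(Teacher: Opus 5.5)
Your proposal is correct and takes essentially the same approach as the paper. You write $B_au\equiv G_a\mm_q$ modulo $h$, so that uniqueness from Lemma~\ref{lem:coefficients} makes the coefficient matrix $G_a^*G_a$; the spanning property then identifies the row space with the codimension-one image of the degree-$q$ forms vanishing at $a$ in $A_q\otimes\C$. The only cosmetic difference is that you check $\mm_q(a)\in\ker G_a$ separately and finish by counting rank, whereas the paper directly identifies the row space as $\{c\mid c\mm_q(a)=0\}$.
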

\begin{proof}
Let $L_a$ be the constant coefficient matrix obtained by expressing
the entries of $B_a u$ modulo $h$ in the basis $\mm_q$, so that
$B_a u=L_a\mm_q$ on $\hat{X}$. Then
\[
 p^NF_a=\|B_a u\|^2
       =\mm_q^*L_a^*L_a\mm_q
 \quad\hbox{on }\hat{X}.
\]
By Lemma~\ref{lem:coefficients}, the coefficient matrix in question
is $L_a^*L_a$, which is positive semidefinite and has kernel
$\ker L_a$.

The rows of $B_a$ span over $\C$ all polynomial $n$-tuples
homogeneous of degree $q-1$. Hence the entries of $B_a u$ span
all homogeneous degree-$q$ polynomials vanishing at $a$, by the
expansion in Lemma~\ref{lem:pointideal}.
Since $h(a)=0$, their classes modulo $h$ span exactly the kernel
of evaluation at $a$ on $A_q\otimes_{\R}\C$.
This kernel has codimension one because $z_0^q(a)=1$.
Thus the row space of $L_a$ is
\[
 \{c\mid c\mm_q(a)=0\},
\]
and consequently $\ker L_a=\C\mm_q(a)$.
\end{proof}

The role of the spanning condition here parallels the generation
condition in the notion of a nice B\'ezoutian
\cite{KummerBezout}.

\subsection{Putting together the matrix pencil}

Distinct boundary points give distinct evaluation lines, so their
associated one-dimensional kernels have trivial intersection.

\begin{lemma}\label{lem:evallines}
If $a,b\in K$ are distinct, then $\C\mm_q(a)\ne\C\mm_q(b)$.
\end{lemma}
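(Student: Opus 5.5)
Suppose $\C\mm_q(a)=\C\mm_q(b)$; we show $a=b$. The plan is to use the coordinate functions of degree one, pushed up to degree $q$ by multiplying with a power of $z_0$, to recover the point from its evaluation vector.

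Take $\mm_q(b)=c\,\mm_q(a)$ with $c\in\C$. The evaluation vector $\mm_q(a)$ is nonzero, because the class of $z_0^q$ lies in $A_q$ and evaluates to $a_0^q=1$. Hence $c\ne0$. Write $z_0^q$ as a linear combination $\lambda\cdot\mm_q$ of the basis of $A_q$. This relation holds modulo $h$, so it holds on $\hat X$. Evaluating at $a$ and at $b$ (both lie in $\hat X$, with $a_0=b_0=1$) gives
\[
 1=b_0^q=\lambda\cdot\mm_q(b)=c\,\lambda\cdot\mm_q(a)=c\,a_0^q=c,
\]
so $c=1$ and $\mm_q(a)=\mm_q(b)$.

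Now fix $1\leqslant i\leqslant n$. The homogeneous polynomial $z_iz_0^{q-1}$ of degree $q$ likewise has a class in $A_q$, written $\mu_i\cdot\mm_q$ on $\hat X$. Evaluating at $a$ and $b$ gives
\[
 a_i=a_ia_0^{q-1}=\mu_i\cdot\mm_q(a)=\mu_i\cdot\mm_q(b)=b_i.
\]
Together with $a_0=b_0=1$, this yields $a=b$, contradicting distinctness.

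No step here is a real obstacle. The only point needing care is that every degree-$q$ form restricts on $\hat X$ to a fixed linear functional of $\mm_q(z)$. This holds because the entries of $\mm_q$ represent a basis of $A_q$, so any degree-$q$ form differs from its expansion in that basis by a multiple of $h$, which vanishes on $\hat X$. The normalization $a_0=1$ fixed in Section~\ref{sec:reductions} is exactly what removes the scalar ambiguity.
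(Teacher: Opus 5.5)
Your argument is correct and rests on the same fact as the paper's: each degree-$q$ form agrees on $\hat X$ with a fixed linear functional of $\mm_q$. The paper evaluates a single separating form $\ell^q$ with $\ell(a)=0\ne\ell(b)$, which shows non-proportionality directly without normalizing any scalar, whereas you first fix the scalar to $1$ using $z_0^q$ and then recover the coordinates using $z_iz_0^{q-1}$. One small slip: $c\ne0$ follows from $\mm_q(b)\ne0$, not from $\mm_q(a)\ne0$, but your displayed computation giving $c=1$ makes that remark unnecessary.
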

\begin{proof}
The vectors $a,b$ are not proportional because both have first
coordinate $1$. Choose a real linear form $\ell$ with $\ell(a)=0$
and $\ell(b)\ne0$.
Since $\mm_q$ represents a basis of $A_q$, there is a unique real
coefficient row $c$ such that $\ell^q\equiv c\mm_q\pmod h$.
Since $h(a)=h(b)=0$, we have
$c\mm_q(a)=0$ and $c\mm_q(b)=\ell(b)^q\ne0$.
Hence the evaluation vectors cannot be proportional over $\C$.
\end{proof}

We assemble the boundary matrices into a single linear pencil and
show that its positive semidefinite locus is exactly $C$.

\begin{lemma}\label{lem:pencil}
Let $M_i$ be the coefficient matrix of $p^N\alpha_i$ in the basis
$\mm_q$, and put $M(x)=\sum_i x_iM_i$. Then
\[
 C=\{x\in\R^{n+1}\mid M(x)\geqslant0\},\qquad M(e)>0.
\]
\end{lemma}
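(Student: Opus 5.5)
The proof combines linearity of the coefficient matrix in $x$ with Lemmas~\ref{lem:exactkernel} and \ref{lem:evallines}, and has three steps.

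\emph{Step 1: $M\geqslant0$ on $C$.} Since $p^NF_x=\sum_i x_i p^N\alpha_i$ and coefficient matrices are unique (Lemma~\ref{lem:coefficients}), the coefficient matrix of $p^NF_x$ is $M(x)$. For $a\in K$ this is the matrix of Lemma~\ref{lem:exactkernel}. Hence $M(a)\geqslant0$, with kernel $\C\mm_q(a)$. Every $x\in C\setminus\{0\}$ is a positive multiple of a point of $B=\conv K$. Writing $x$ as a nonnegative combination of points of $K$ gives $M(x)\geqslant0$ on $C$, by linearity and convexity of the semidefinite cone.

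\emph{Step 2: $M(e)>0$.} The point $e$ lies in the interior of $B$ relative to $\{x_0=1\}$. Take any line through $e$ in that hyperplane. It meets $K$ in two distinct points $a,b$, and $e=ta+(1-t)b$ with $0<t<1$. A vector $w\in\ker M(e)$ satisfies $0=w^*M(e)w=t\,w^*M(a)w+(1-t)\,w^*M(b)w$. Both terms are nonnegative, so $w\in\ker M(a)\cap\ker M(b)=\C\mm_q(a)\cap\C\mm_q(b)$. By Lemma~\ref{lem:evallines} this intersection is $\{0\}$. So $M(e)>0$.

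\emph{Step 3: $M(x)\not\geqslant0$ for $x\notin C$.} This is the segment argument. Let $x\notin C$ and consider the segment $y(t)=(1-t)e+tx$, $t\in[0,1]$. Since $C$ is closed and convex with $e$ in its interior, there is a unique $t_0\in(0,1)$ with $y(t_0)\in\partial C$. Then $y(t_0)\neq0$: if $y(t_0)=0$, then $e$ would be a positive multiple of $-x$. Pointedness of $C$ would then force $x\in-\inter C$, but the segment from $e$ to $x$ passes through $0$, which is consistent only with $t_0$ being where it hits $0$. So the degenerate case needs a separate remark; it is the step where I expect care to be needed. One option is to use a different interior point in place of $e$, one not on the line through $x$, so that the segment avoids $0$.

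With $y(t_0)\neq0$, write $y(t_0)=\lambda a$ with $\lambda>0$ and $a\in K$. Then $M(y(t_0))$ has the nonzero kernel vector $w=\mm_q(a)$, because $\mm_q(a)\neq0$ (its $z_0^q$ component is $1$). Put $g(t)=w^*M(y(t))w$. It is affine in $t$, with $g(t_0)=0$ and $g(0)=w^*M(e)w>0$. Therefore $g(1)<0$, so $M(x)\not\geqslant0$.

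For a general $x\notin C$, first choose an interior point $e'\in\inter C$ such that the segment from $e'$ to $x$ misses $0$. Small perturbations of $e$ work whenever $x$ is not a negative multiple of $e$. Also note that $M(e')>0$, because $M$ is positive definite on the whole open cone: $M(e)>0$ combined with Step 1 gives $M(e+c)=M(e)+M(c)>0$ for $c\in C$, and $\inter C$ is covered by such sums after rescaling. If $x=-se$ with $s>0$, then $M(x)=-sM(e)<0$ directly. This handles all cases and proves $C=\{x\mid M(x)\geqslant0\}$.
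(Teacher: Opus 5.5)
Your proof is correct and follows the paper's argument. It proves positivity on $C$ via $B=\conv K$, gets $M(e)>0$ from the chord through $e$ together with Lemma~\ref{lem:evallines}, and runs the segment argument from $e$ to $x$, which you phrase with the explicit witness $w=\mm_q(a)$ instead of the paper's contradiction $M(y)\geqslant(1-t)M(e)>0$. The perturbation detour is unnecessary: the segment from $e$ reaches $0$ only when $x$ is a negative multiple of $e$, and there $M(0)=0$ is singular, so any nonzero $w$ works in your affine function $g$ (as in the paper).
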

\begin{proof}
Linearity and coefficient uniqueness identify $M(a)$ with the matrix
of $p^NF_a$. Lemma~\ref{lem:exactkernel} gives
\[
 M(a)\geqslant0,\qquad \ker M(a)=\C\mm_q(a)\quad(a\in K).
\]
Since $B=\conv K$ and $C$ consists of its nonnegative multiples,
$M$ is positive semidefinite on $C$.
Every nonzero point of $\partial C$ is a positive multiple of a point
of $K$, so its matrix is singular. Since $M(0)=0$, $M$ is singular
on all of $\partial C$.

Since $e$ lies in the relative interior of $B$, a chord through $e$
has distinct endpoints $a,b\in K$, with $e=(1-s)a+sb$ for some
$0<s<1$. A positive combination of positive semidefinite matrices
has kernel equal to the intersection of their kernels. Hence
\[
 \ker M(e)=\ker M(a)\cap\ker M(b)
 =\C\mm_q(a)\cap\C\mm_q(b)=\{0\},
\]
where the last equality follows from Lemma~\ref{lem:evallines}.
Thus $M(e)>0$.

If $x\notin C$ but $M(x)\geqslant0$, the segment from $e$ to $x$
first meets $\partial C$ at $y=(1-t)e+tx$ with $0<t<1$. Yet
\[
 M(y)=(1-t)M(e)+tM(x)\geqslant(1-t)M(e)>0,
\]
contradicting boundary singularity. This proves the equality.
\end{proof}

We now combine the construction with the initial reductions to prove
Theorem~\ref{thm:main}.
\begin{proof}
Under the reductions in Section~\ref{sec:reductions},
Lemma~\ref{lem:alpha} constructs the polynomial supporting field.
Lemmas~\ref{lem:coefficients}--\ref{lem:exactkernel} convert it into
 matrices with one-dimensional evaluation kernels.
Lemma~\ref{lem:pencil} assembles them into a pencil defining $C$.
\end{proof}




\end{document}